\providecommand \@dotsep{5} \def\listtodoname{List of Todos} \def\listoftodos{\@starttoc{tdo}\listtodoname} \makeatother 
\patchcmd{\@startsection}{\@afterindenttrue}{\@afterindentfalse}{}{}             
\patchcmd{\part}{\bfseries}{\bfseries\LARGE}{}{}
\patchcmd{\section}{\scshape}{\bfseries}{}{}\renewcommand{\@secnumfont}{\bfseries} 
\patchcmd{\@settitle}{\uppercasenonmath\@title}{\large}{}{}
\patchcmd{\@setauthors}{\MakeUppercase}{}{}{}
\theoremstyle{plain}
\newtheorem{thm}{Theorem}[section] 
\newaliascnt{lemma}{thm}\newtheorem{lemma}[lemma]{Lemma}\aliascntresetthe{lemma}
\newaliascnt{cor}{thm}\newtheorem{cor}[cor]{Corollary}\aliascntresetthe{cor}
\newaliascnt{prop}{thm}\newtheorem{prop}[prop]{Proposition}\aliascntresetthe{prop}
\newtheorem*{thm*}{Theorem}
\newtheorem*{lem*}{Lemma}
\newtheorem*{cor*}{Corollary}
\theoremstyle{definition}
\newaliascnt{df}{thm}\newtheorem{df}[df]{Definition}\aliascntresetthe{df}
\newaliascnt{rem}{thm}\newtheorem{rem}[rem]{Remark}\aliascntresetthe{rem}
\newaliascnt{ex}{thm}\newtheorem{ex}[ex]{Example}\aliascntresetthe{ex}
\newtheorem{question}[thm]{Question}
\newtheorem*{df*}{Definition}
\newtheorem*{ex*}{Example}
\newtheorem*{rem*}{Remark}
\theoremstyle{remark}
\DeclareRobustCommand{\gobblefour}[5]{}    
\DeclareFontFamily{OT1}{pzc}{}                                
\DeclareFontShape{OT1}{pzc}{m}{it}{<-> s * [1.10] pzcmi7t}{}
\DeclareMathAlphabet{\mathpzc}{OT1}{pzc}{m}{it}
\DeclareSymbolFont{sfoperators}{OT1}{bch}{m}{n} \DeclareSymbolFontAlphabet{\mathsf}{sfoperators} \makeatletter\def\operator@font{\mathgroup\symsfoperators}\makeatother 
\DeclareSymbolFont{cmletters}{OML}{cmm}{m}{it}              
\DeclareSymbolFont{cmsymbols}{OMS}{cmsy}{m}{n}
\DeclareSymbolFont{cmlargesymbols}{OMX}{cmex}{m}{n}
\DeclareMathSymbol{\myjmath}{\mathord}{cmletters}{"7C}     \let\jmath\myjmath 
\DeclareMathSymbol{\myamalg}{\mathbin}{cmsymbols}{"71}     
\DeclareMathSymbol{\mycoprod}{\mathop}{cmlargesymbols}{"60}
\DeclareMathSymbol{\myalpha}{\mathord}{cmletters}{"0B}     \let\alpha\myalpha 
\DeclareMathSymbol{\mybeta}{\mathord}{cmletters}{"0C}      \let\beta\mybeta
\DeclareMathSymbol{\mygamma}{\mathord}{cmletters}{"0D}     \let\gamma\mygamma
\DeclareMathSymbol{\mydelta}{\mathord}{cmletters}{"0E}     \let\delta\mydelta
\DeclareMathSymbol{\myepsilon}{\mathord}{cmletters}{"0F}   \let\epsilon\myepsilon
\DeclareMathSymbol{\myzeta}{\mathord}{cmletters}{"10}      \let\zeta\myzeta
\DeclareMathSymbol{\myeta}{\mathord}{cmletters}{"11}       \let\eta\myeta
\DeclareMathSymbol{\mytheta}{\mathord}{cmletters}{"12}     \let\theta\mytheta
\DeclareMathSymbol{\myiota}{\mathord}{cmletters}{"13}      \let\iota\myiota
\DeclareMathSymbol{\mykappa}{\mathord}{cmletters}{"14}     \let\kappa\mykappa
\DeclareMathSymbol{\mylambda}{\mathord}{cmletters}{"15}    \let\lambda\mylambda
\DeclareMathSymbol{\mymu}{\mathord}{cmletters}{"16}        \let\mu\mymu
\DeclareMathSymbol{\mynu}{\mathord}{cmletters}{"17}        \let\nu\mynu
\DeclareMathSymbol{\myxi}{\mathord}{cmletters}{"18}        \let\xi\myxi
\DeclareMathSymbol{\mypi}{\mathord}{cmletters}{"19}        \let\pi\mypi
\DeclareMathSymbol{\myrho}{\mathord}{cmletters}{"1A}       \let\rho\myrho
\DeclareMathSymbol{\mysigma}{\mathord}{cmletters}{"1B}     \let\sigma\mysigma
\DeclareMathSymbol{\mytau}{\mathord}{cmletters}{"1C}       \let\tau\mytau
\DeclareMathSymbol{\myupsilon}{\mathord}{cmletters}{"1D}   \let\upsilon\myupsilon
\DeclareMathSymbol{\myphi}{\mathord}{cmletters}{"1E}       \let\phi\myphi
\DeclareMathSymbol{\mychi}{\mathord}{cmletters}{"1F}       \let\chi\mychi
\DeclareMathSymbol{\mypsi}{\mathord}{cmletters}{"20}       \let\psi\mypsi
\DeclareMathSymbol{\myomega}{\mathord}{cmletters}{"21}     \let\omega\myomega
\DeclareMathSymbol{\myvarepsilon}{\mathord}{cmletters}{"22}\let\varepsilon\myvarepsilon
\DeclareMathSymbol{\myvartheta}{\mathord}{cmletters}{"23}  \let\vartheta\myvartheta
\DeclareMathSymbol{\myvarpi}{\mathord}{cmletters}{"24}     \let\varpi\myvarpi
\DeclareMathSymbol{\myvarrho}{\mathord}{cmletters}{"25}    \let\varrho\myvarrho
\DeclareMathSymbol{\myvarsigma}{\mathord}{cmletters}{"26}  \let\varsigma\myvarsigma
\DeclareMathSymbol{\myvarphi}{\mathord}{cmletters}{"27}    \let\varphi\myvarphi
\DeclareMathOperator{\sign}{sign}
\DeclareMathOperator{\supp}{supp}
\DeclareMathOperator{\Minsupp}{Minsupp}
\DeclareMathOperator{\row}{Row}
\DeclareMathOperator{\col}{col}
\DeclareMathOperator{\mat}{mat}
\DeclareMathOperator{\tmat}{tmat}
\DeclareMathOperator{\CapVec}{Vec}
\DeclareMathOperator{\Cov}{Cov}
\DeclareMathOperator{\psc}{psc}
\newcommand\C{{\mathbb C}}
\newcommand\F{{\mathbb F}}
\renewcommand\H{{\mathbb H}}
\newcommand\K{{\mathbb K}}
\newcommand\N{{\mathbb N}}
\newcommand\NN{{\mathbb N}}
\renewcommand\P{{\mathbb P}}
\newcommand\Q{{\mathbb Q}}
\newcommand\R{{\mathbb R}}
\renewcommand\S{{\mathbb S}}
\newcommand\T{{\mathbb T}}
\newcommand\V{{\mathbb V}}
\newcommand\Z{{\mathbb Z}}
\newcommand\cC{{\mathcal C}}
\newcommand\cH{{\mathcal H}}
\newcommand\cX{{\mathcal X}}
\renewcommand\geq{\geqslant}
\renewcommand\leq{\leqslant}
\renewcommand\emptyset\varnothing
\newif\ifnscomms
\newcommand{\set}[1]{\left\{ {#1}\right\}}
\title{On some notions of rank for matrices over tracts}
\author{Matthew Baker}
\address{\rm School of Mathematics, Georgia Institute of Technology, Atlanta, USA}
\email{mbaker@math.gatech.edu}
\author{Noah Solomon}
\address{\rm School of Mathematics, Georgia Institute of Technology, Atlanta, USA}
\email{noah.solomon@math.gatech.edu}
\author{Tianyi Zhang}
\address{\rm School of Mathematics, Georgia Institute of Technology, Atlanta, USA}
\email{kafuka@gatech.edu}
\thanks{The first author was supported by a Simons Foundation Collaboration Grant and by NSF research grant DMS-2154224. The second author was supported by the Department of Education's Graduate Assistance in Areas of National Need Award \#P200A240169. The authors thank Oliver Lorscheid for helpful discussions, and the anonymous referees for their unusually detailed and constructive comments on a preliminary draft of this manuscript.}
\begin{document}

\begin{abstract}
Given a tract $F$ in the sense of Baker and Bowler and a matrix $A$ with entries in $F$, we define several notions of rank for $A$.
In this way, we are able to unify and find conceptually satisfying proofs for various results about ranks of matrices that one finds scattered throughout the literature.
\end{abstract}

\maketitle


\section{Introduction}
\label{introduction}

In \cite{Baker-Bowler19}, the first author and Nathan Bowler introduced a new class of algebraic objects called {\bf tracts} which generalize not only fields but also partial fields and hyperfields.
Given a tract $F$, Baker and Bowler also define a notion of {\bf $F$-matroid}.\footnote{In fact, one finds two notions (weak and strong) of {\bf $F$-matroids} in \cite{Baker-Bowler19}; we will work exclusively in this paper with strong $F$-matroids. Over many tracts of interest, the two notions coincide.}
If $F=K$ is a field, then a $K$-matroid of rank $r$ on the finite set $E = \{ 1,\ldots,n \}$ is just an $r$-dimensional subspace of $K^n$, and matroids over the Krasner hyperfield $\K$ are just matroids in the usual sense.

\medskip

In this paper, we use the theory of $F$-matroids to define a new notion of rank for matrices with entries in a tract $F$. When $F=K$ is a field, this gives the usual notion of rank, and when $F=\K$ is the Krasner hyperfield we recover an intriguing notion of rank for matrices of zero-non-zero patterns recently introduced by Deaett \cite{Deaett}.
We also introduce a relative notion of rank for matrices over $F$ which depends on the choice of a tract homomorphism $\varphi : F' \to F$; this is, argubaly, the more important notion.
We compare these new notions of rank to more familiar notions like row rank, column rank, and determinantal rank (all of which have straightforward generalizations to matroids over tracts), providing a number of general inequalities as well as some inequalities which only hold under additional hypotheses.

\medskip

Our motivation for studying these concepts comes from a desire to unify, and find conceptually satisfying proofs for, various results about ranks of matrices that one finds scattered throughout the literature, each of which admit an interpretation as a statement about matrices with entries in a specified tract.
For example, we will give a unified proof and conceptual generalization of the following results: 

\begin{thm} \label{theorem:thm1}
\begin{enumerate}
    \item (Berman et. al.){\cite[Proposition 2.5]{Berman2008AnUB}} Let $A = (a_{ij})$ be a zero-non-zero pattern, i.e., an $m \times n$ matrix whose entries are each $0$ or $\star$, and suppose that each row of $A$ has at least $k$ $\star$s. 
    Then for any infinite field $K$, there exists an $m \times n$ matrix $A' = (a'_{ij})$ over $K$ with zero-non-zero pattern $A$\footnote{This means that $A'_{ij}=0$ (resp. $A'_{ij}\neq 0$) iff $A_{ij}=0$ (resp $A_{ij}=\star$).} and having rank at most $n-k+1$.
    \item (Alon-Spencer){\cite[Lemma 13.3.3]{alon04}} Let $A = (a_{ij})$ be a full sign pattern, i.e., an $m \times n$ matrix whose entries are each $+$ or $-$, and suppose that there are at most $k$ sign changes in each row of $A$.
    Then there exists an $m \times n$ matrix $A' = (a'_{ij})$ over $\R$ with sign pattern $A$\footnote{This means that $A'$ has non-zero entries and $A'_{ij}=+$ (resp. $A'_{ij}=-$) iff $A_{ij}>0$ (resp. $A_{ij}<0$).} and having rank at most $k+1$.
    \end{enumerate}
\end{thm}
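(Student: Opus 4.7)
The plan is to deduce both assertions from a single result about the minimum rank of a lift of a matrix along a tract homomorphism. Let $\varphi: F' \to F$ be a tract homomorphism and write
\[ \rk_\varphi(A) := \min \{ \rk_{F'}(A') : A' \in (F')^{m \times n}, \; \varphi(A') = A \} \]
for the relative rank introduced in the paper. Statement (1) asserts $\rk_\varphi(A) \leq m - k + 1$ for the ``zero versus nonzero'' homomorphism $\varphi : K \to \K$ (from an infinite field $K$ to the Krasner hyperfield), and statement (2) asserts $\rk_\varphi(A) \leq k + 1$ for the sign homomorphism $\varphi : \R \to \S$. Thus both parts become instances of bounding $\rk_\varphi(A)$ from above.

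The key ingredient is the paper's inequality that bounds $\rk_\varphi(A)$ by a suitable ``row rank of $A$ over $F$'': concretely, if every row of $A$ admits a lift lying in a common subspace $V \subseteq (F')^n$ of dimension $d$, then the matrix $A'$ whose rows are those lifts has rank at most $d$, giving $\rk_\varphi(A) \leq d$. Both parts therefore reduce to exhibiting a subspace $V$ of the required dimension that accommodates lifts of every row simultaneously.

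For (2), this is the classical polynomial method of Alon--Spencer. Fix $x_1 < \cdots < x_n$ in $\R$ and let $V = \{(p(x_1), \ldots, p(x_n)) : p \in \R[x], \; \deg p \leq k\} \subseteq \R^n$, a subspace of dimension $k + 1$. Any sign pattern with at most $k$ sign changes is realized as $\sign(p(x_1)), \ldots, \sign(p(x_n))$ for a polynomial $p$ of degree $\leq k$ (a signed product of linear factors whose roots separate consecutive sign changes), so every row of $A$ lifts into $V$ and $\rk_\varphi(A) \leq k + 1$. For (1), one seeks an analogous $(m-k+1)$-dimensional subspace $V \subseteq K^n$ into which each row of $A$ can be lifted while preserving its zero positions; a natural strategy is a polynomial evaluation construction with roots placed at the prescribed zero positions, using the infiniteness of $K$ to choose coefficients generically so as to avoid accidental cancellations.

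The main obstacle is the construction for (1): the zero sets of different rows can interact in intricate ways, so producing a single low-dimensional subspace of $K^n$ that simultaneously accommodates lifts of every row requires a careful genericity argument over the infinite field $K$. The sign case (2) is essentially the original Alon--Spencer polynomial method, and the main contribution is to recast both statements as instances of the relative rank inequality, yielding a conceptually uniform proof.
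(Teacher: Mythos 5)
Your reduction of both parts to bounding the minimum rank of a lift is the paper's point of view, and your ``common subspace'' lemma (if every row lifts into a fixed $d$-dimensional subspace $V\subseteq (F')^n$, then some lift of $A$ has rank at most $d$) is exactly the elementary direction of the machinery the paper deploys; the paper packages it as ``every row of $A$ is a covector of the push-forward of a realizable matroid of rank $d$'' and then uses epicness of $\varphi$ via Theorem~\ref{thm:epic} to manufacture the lifts. For part (2) your argument is complete and correct for full sign patterns: it is the classical Alon--Spencer polynomial construction, lifting each row to $(p(x_1),\dots,p(x_n))$ with $\deg p\leq k$. The paper instead proves the bound through Lemma~\ref{lemma:CovectorOfCnr} (rows are covectors of the alternating oriented matroid $C^{n,k+1}$, which is realizable) together with Proposition~\ref{prop:sign is epic}; what that abstraction buys is the extension to non-full sign patterns recorded in Theorem~\ref{thm:signPatternApplication}, whereas your explicit polynomials give the full case directly and more concretely.

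For part (1), however, you stop short, and as written this is a genuine gap: you only ``seek'' the subspace and defer the construction to a ``careful genericity argument,'' worrying about interactions between the zero sets of different rows. The obstacle is illusory, because rows are lifted \emph{independently} into one fixed subspace. Fix distinct points $x_1,\dots,x_n\in K$ (possible since $K$ is infinite) and let $V$ be the span of evaluation vectors of polynomials of degree at most $n-k$, so $\dim V=n-k+1$. A row whose zero set is $Z$ (of size at most $n-k$) is lifted by $p(x)=\prod_{i\in Z}(x-x_i)$, which vanishes at $x_j$ precisely for $j\in Z$; no genericity and no compatibility between rows is needed, and all lifts lie in $V$. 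Note also the bookkeeping: under the ``each row has at least $k$ stars'' hypothesis the resulting bound is $n-k+1$, which is what the paper's Theorem~\ref{thm:zeroPatternApplication} proves (via the uniform matroid $U_{n-k+1,n}$, its Vandermonde representation, and epicness of $K\to\K$); the bound $m-k+1$ in the statement corresponds to the transposed (column) hypothesis, so your plan to find an $(m-k+1)$-dimensional subspace of $K^n$ under the row hypothesis aims at the wrong number --- either correct the target to $n-k+1$ or apply the construction to $A^T$.
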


Our proof naturally yields an extension of (2) to the non-full case, see Theorem~\ref{thm:signPatternApplication} below.

\medskip

Our results also give a unified way of viewing (and proving) results like the following.
For the statement of (2), we define a sequence $z_1,\ldots,z_n$ of complex numbers to be {\bf colopsided} if $0$ is not in their convex hull (when viewed as elements of $\R^2 \cong \C$).

\begin{thm} \label{theorem:thm2}
\begin{enumerate}
    \item (Camion-Hoffman) \cite[Theorem 3]{Camion1966}, \cite[Theorem 4.6]{Goucha2021phaseless} Let $A = (a_{ij})$ be an $n\times n$ matrix with non-negative real entries. Then every complex matrix $A'=(a'_{ij})$ with $|a'_{ij}|=a_{ij}$ for all $i,j$ is non-singular iff there exists an $n\times n$ permutation matrix $P$ and an $n\times n$ diagonal matrix $D$ with non-negative real entries such that $PAD$ is strictly diagonally dominant.
    \item (McDonald et. al.) \cite[Lemma 3.2]{Goucha2021phase}, \cite[Lemma 3.2]{Mcdonald2000}
    Let $A = (a_{ij})$ be an $n\times n$ matrix with entries in $\S^1 \cup \{ 0 \}$, where $\S^1$ is the complex unit circle. Then every complex matrix $A'=(a'_{ij})$ with ${\rm phase}(a'_{ij})=a_{ij}$ for all $i,j$ is non-singular iff there does not exist a scaling of its rows by elements of 
    $\S^1 \cup \{ 0 \}$, not all zero, such that no column is colopsided.
\end{enumerate}
\end{thm}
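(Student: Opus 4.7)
Both parts of the theorem fit a common template: in each case there is a tract homomorphism $\varphi : \C \to F$ (with $F$ the triangle hyperfield on $\R_{\geq 0}$, in which $0 \in \hyperplus_i b_i$ iff $\max_i b_i \leq \sum_{k \neq i} b_k$, for part (1); and $F = \P = \S^1 \cup \{0\}$ the phase hyperfield, in which $0 \in \hyperplus_i a_i$ iff $0 \in \mathrm{conv}(a_i)$, for part (2)), and an $n \times n$ matrix $A$ over $F$. The plan is to deduce both statements from a single master principle for $\varphi$-relative rank, of the following form: every $A'$ over $\C$ with $\varphi(A') = A$ is non-singular if and only if there is no non-zero row vector $v \in F^n$ such that $0 \in \hyperplus_i v_i A_{ij}$ for every column $j$. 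This is exactly the sort of lifting statement that the tract-theoretic machinery developed in this paper is designed to produce; it is essentially the assertion that ``full $\varphi$-relative rank'' coincides with the absence of a combinatorial (hyperfield) row dependence.

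Once the master principle is in hand, unwinding it in each hyperfield gives the two results. For part (2) the translation is direct: the dependence condition is literally the existence of a non-zero $v \in (\S^1 \cup \{0\})^n$ making no column of $vA$ colopsided, which is the statement of McDonald et al. For part (1), the hypersum condition says that for some non-zero $v \in \R_{\geq 0}^n$, every column of $vA$ satisfies a triangle inequality. A Hall/K\"onig-style matching argument on the support pattern of $A$ then equates the \emph{absence} of such a $v$ with the existence of a permutation matrix $P$ and non-negative diagonal $D$ such that $PAD$ is strictly diagonally dominant, recovering Camion--Hoffman.

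The main obstacle is the master principle itself. Its easy direction (no hyperfield dependence implies every complex lift is non-singular) is a formal consequence of $\varphi$ preserving hyperaddition, since any complex linear dependence of the rows of $A'$ pushes forward to an $F$-dependence in $A$. The difficult direction --- producing a singular complex lift $A'$ from an abstract $F$-dependence --- rests on two classical lifting facts: non-negative reals satisfying a strict triangle inequality are realized as the moduli of complex numbers summing to zero, and phases whose convex hull contains $0$ arise as the phases of complex numbers with non-negative real coefficients summing to zero. The real content is to package these facts uniformly as a property of the tract maps $|\cdot|$ and $\mathrm{phase}$, so that both Theorem~\ref{theorem:thm2}(1) and Theorem~\ref{theorem:thm2}(2) fall out as corollaries of a single tract-level statement; this is where I expect most of the technical work to live.
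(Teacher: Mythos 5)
Your plan is essentially the paper's own: your ``master principle'' is precisely Theorem~\ref{thm:quotienthyperfieldrank}/Corollary~\ref{cor:quotienthyperfieldrank}, proved exactly as you sketch (the easy direction by pushing a complex row dependence forward, the hard direction by lifting an $F$-dependence entrywise to a singular complex matrix), and the paper likewise obtains (1) and (2) by specializing to the quotient maps $\C \to \V$ and $\C \to \P$, with the same ``small amount of extra effort'' needed to pass to the diagonal-dominance formulation in the Camion--Hoffman case. The only difference in emphasis is that the uniform packaging you expect to be the main technical work is, in the paper's framework, immediate: since $\V=\C/\S^1$ and $\P=\C/\R_{>0}$ are quotient hyperfields, their null sets consist by definition of sums that lift to genuine zero sums in $\C$, so your two ``classical lifting facts'' are built into the definitions.
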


Theorem~\ref{theorem:thm1}(1) (resp. (2)) is obtained by applying Theorem~\ref{thm:epic} below to the natural homomorphism $K \to \K$ (resp. the natural homomorphism ${\rm sign} : \R \to \S$, where $\S$ is the sign hyperfield).

Theorem~\ref{theorem:thm2}(1) (resp. (2)) is obtained by applying Theorem~\ref{thm:quotienthyperfieldrank} below to the natural homomorphism $\C \to \V$, where $\V$ is Viro's triangle hyperfield (resp. the natural homomorphism $\C \to \P$, where $\P$ is the phase hyperfield).

\medskip

We conclude the paper with some open questions for future study.

\section{Review of tracts, hyperfields, and matroids over tracts}

Before we may define our various of notions of rank, we recall for the reader the basic ideas of tracts. For more details concerning the definitions and concepts in this section, as well as numerous examples, see \cite{Baker-Bowler19}. 

\subsection{Tracts}

Given an abelian group $G$, let $\NN[G]$ denote the group semiring associated to $G$.

\begin{df}
A {\bf tract} is a multiplicatively written commutative monoid $F$ with an absorbing element $0$ such that $F^\times := F \setminus \{ 0 \}$ is a group, together with a subset $N_F$ of $\NN[F^\times]$ satisfying:
\begin{itemize}
\item[(T1)] The zero element of $\NN[F^\times]$ belongs to $N_F$.
\item[(T2)] There is a unique element $\epsilon \neq 0$ of $F^\times$ with $1 + \epsilon \in N_F$.
\item[(T3)] $N_F$ is closed under the natural action of $F^\times$ on $\NN[F^\times]$.
\end{itemize}
\end{df}  

We call $N_F$ the {\bf  null set} of $F$, and write $-1$ instead of $\epsilon$.

Intuitively, a tract is an object generalizing the notion of a field. Instead of an operation defining addition, one has the data of the null set, which corresponds to the collection of subsets ``summing'' to zero.

\begin{df}
A {\bf homomorphism} of tracts is a map $\varphi : F' \to F$ such that $\varphi(0)=0$, $\varphi$ induces a group homomorphism from $(F')^\times$ to $F^\times$, and $\varphi(N_{F'}) \subseteq N_{F}$.
\end{df}

\begin{ex}\label{ex:Tropical}
    One important tract is the \textbf{tropical hyperfield} $\T$, which has underlying multiplicative monoid $(\R_{\geq 0},\cdot)$ with $\epsilon = 1$, and whose null set $N_\T$ consists of formal sums $\sum a_i$ where the maximum of the $a_i$ is achieved at least twice. 

    \medskip
    Given a field $K$, any non-archimedean absolute value $| \cdot | : K \to \R_{\geq 0}$
    is a tract homomorphism into $\T$. A well studied example of this is $|f| = e^{-v(f)}$, where $v$ is the valuation taking a Puiseux series in $\C\{\!\{T\}\!\}$ to the exponent of its initial term, i.e. the lowest exponent appearing with non-zero coefficient in the series expansion.
\end{ex}

\begin{df}
Given a tract $F$, a natural number $n$, and vectors $X = (X_i),Y=(Y_i) \in F^n$, we say that $X$ is {\bf orthogonal} to $Y$, denoted $X \perp Y$, if $\sum_i X_i Y_i \in N_F$.
\end{df}

\begin{df}
Given a tract $F$, we say that vectors $X_1,\ldots,X_k \in F^n$ are {\bf linearly dependent over $F$} if there exist $c_1,\ldots,c_k \in F$, not all zero, such that $\sum c_i X_i \in (N_F)^n$, and {\bf linearly independent} otherwise.
\end{df}

\subsection{Quotient hyperfields and partial fields}
There are two main types of tracts of interest to us,
quotient hyperfields and partial fields. We present definitions of these objects which exhibit them \textit{as tracts}, which are equivalent to their standard definitions originally due to M. Krasner in the hyperfield case and Semple \& Whittle \cite{Semple-Whittle96} in the partial field case.

\begin{df}
Let $K$ be a field and let $H \leq K^\times$ be a multiplicative subgroup. Then the quotient monoid $F = K/H := (K^\times / H) \cup \{ 0 \}$ is naturally a tract: the null set $N_F$ consists of all expressions $\sum_{i=1}^k x_i$ such that there exist $c_i \in H$ with
$\sum_{i=1}^k c_i x_i = 0$ in $K$. We call a tract of this form a {\bf quotient hyperfield}. Note that the natural map $\varphi : K \to F$ is a homomorphism of tracts.
\end{df}

\begin{rem}
    As the name ``quotient hyperfield'' suggests, there are more general objects called \textbf{hyperfields}, which can be thought of a fields where the addition operation is allowed to be set valued. Hyperfields are intermediate in generality between quotient hyperfields and tracts. Examples of hyperfields which are not quotient hyperfields were first given in 
    \cite{Massouros85}. 
    All of our examples will be quotient hyperfields, so we can ignore the distinction for the purposes of this paper. A more complete discussion of hyperfields from the point of view of tracts is given in \cite{Baker-Bowler19}.
\end{rem}

\begin{ex}
\begin{enumerate}
    \item The {\bf Krasner hyperfield} $\K$ can be defined as the quotient $K / K^\times$ for any field $K$ with $|K|>2$. Then the resulting tract has two elements $\set{0,1}$, and the null set consists of all sums with at least two $1$s.
    \item The {\bf sign hyperfield} $\S$ is equal to $\R / \R_{>0}$. The resulting tract has three elements denoted $-1,0$ and $1$. The null set consists of sums with at least two non-zero terms of opposite sign.
    \item The {\bf triangle hyperfield} $\V$ is equal to $\C / \S^1$, where $\S^1$ is the complex unit circle. Since we are identifying elements of $\C$ with the same modulus, the resulting tract has elements corresponding to the non-negative real numbers. The null set consists of the sums which could form the side lengths of a convex polygon in the plane, so for example the three-term sums in $N_{\V}$ are those satisfying the triangle inequality. 
    \item The {\bf phase hyperfield} $\P$ is equal to $\C / \R_{>0}$. Here we are identifying elements of $\C$ according to their phase, so the elements of this tract can be identified with $S^1 \cup \set{0}$, although we often choose representatives for non-zero elements which do not lie in $S^1$ to make computation easier. The null set of $\P$ consists of $0$ along with all formal sums of non-zero complex numbers whose convex hull contains the origin. 
    Note that a sum in $\N[\P^\times]$ is null precisely when the vector whose entries are the terms of the sum is colopsided in the sense above.
\end{enumerate}
\end{ex}

Partial fields are another class of algebraic objects which can naturally be viewed as tracts. 

\begin{df} \label{df:partial field}
Let $R$ be a commutative ring with $1$ having unit group $R^\times$, and let $H \leq R^\times$ be a multiplicative subgroup containing $-1$. Then the multiplicative monoid $P = H \cup \{ 0 \}$ is naturally a tract: the null set $N_P$ consists of all expressions $\sum_{i=1}^k x_i$ such that $\sum_{i=1}^k x_i = 0$ in $R$.
We call a tract of this form a {\bf partial field}.\footnote{Note that partial fields are defined differently in \cite{Baker-Lorscheid18}; there the null set is by definition generated by expressions of length at most 3. For our purposes it is simpler to use the present definition.}
\end{df}

\begin{ex} \label{ex:regular partial field}
If we take $R=\Z$ and $H=\{ \pm 1 \}$ in Definition~\ref{df:partial field}, we obtain a tract called the {\bf regular partial field}.
\end{ex}

\subsection{Matroids over tracts}

Let $F$ be a tract and let $E$ be a finite set. For simplicity of notation we identify $E$ with the set $[n] := \{ 1,\ldots, n \}$. 
For $V \in F^n$, the {\bf support} of $V$ is defined to be the set of $i \in [n]$ such that $V_i \neq 0$.

\medskip

For our purposes, it is most convenient to define (strong) $F$-matroids on $E$ as follows. (We assume the reader is familiar with the basic concepts of matroid theory.)

\begin{df}\cite{Baker-Bowler19}\label{def: F matroid}
An {\bf $F$-matroid} $M$ of rank $r$ on $E$ is a matroid $\underline{M}$ of rank $r$ on $E$, together with subsets $\cC(M) \subseteq F^n$ and $\cC^*(M) \subseteq F^n$ (called the {\bf $F$-circuits} and {\bf $F$-cocircuits} of $M$, respectively), such that:
\begin{enumerate}
    \item $\cC(M)$ and $\cC^*(M)$ are both closed under multiplication by elements of $F^\times$.
    \item For any $C \in \cC(M)$, the support of $C$ is a circuit of $\underline{M}$, and for any $C^* \in \cC^*(M)$, the support of $C^*$ is a cocircuit of $\underline{M}$.
    \item For any circuit $\underline{C}$ of $\underline{M}$, there is a projectively unique (meaning unique up to multiplication by some element of $F^\times$) $F$-circuit $C$ whose support is $\underline{C}$, and for any
    cocircuit $\underline{C}^*$ of $\underline{M}$, there is a projectively unique $F$-cocircuit $C^*$ whose support is $\underline{C}^*$.
    \item For any $F$-circuit $C \in \cC(M)$ and any $F$-cocircuit $C^* \in \cC^*(M)$, we have $C \perp C^*$.
\end{enumerate}
We denote the rank of $M$ by $r(M) = r(\underline{M})$.
\end{df}
Note that any matroid has finitely many circuits and cocircuits, so $(3)$ implies that an $F$-matroid has, up to projective equivalence, finitely many $F$-circuits and $F$-cocircuits. As in the case of ordinary matroids, it is enough to specify the $F$-circuits or $F$-cocircuits as they determine each other, although we will often give both in examples to make computation easier.

Hence one may think of an $F$-matroid $M$ as an algebraic object sitting ``above'' the purely combinatorial matroid $\underline{M}$. To aid in readability, we use underlined symbols to denote underlying combinatorial objects and symbols without underlines for their algebraic avatars.

Given a tract $F$, it may or may not be true that a given matroid $\underline{M}$ can be endowed with the structure of an $F$-matroid. In some cases, the class of matroids which may be given an $F$-matroid structure have a name in the literature, including the following examples:

\begin{ex}\label{ex:FMatExs}
\begin{enumerate}
\item {\cite[Example 3.30]{Baker-Bowler19},\:\cite[Proposition 2.19]{Anderson19}} If $F=K$ is a field, a $K$-matroid of rank $r$ on $[n]$ is the same thing as an $r$-dimensional $K$-linear subspace $V$ of $K^n$. To obtain a subspace $V$ from a $K$-matroid $M$, we form a matrix $A$ whose rows are $K$-cocircuits of $M$, one for each projective equivalence class, and let $V$ be the row space of $A$. To recover the $K$-cocircuits of $M$ from $V$, one considers all non-zero vectors in $V$ which have minimal support, and then defines the $K$-circuits of $M$ to be all support-minimal vectors orthogonal to every $K$-cocircuit.

The collection of matroids which may be endowed with the structure of a $K$-matroid is precisely the collection of $K$-representable matroids in the usual sense of matroid theory. 
In this sense, one can view the theory of matroids over tracts as a generalization of the study of subspaces of a vector space.
\item {\cite[Example 3.31]{Baker-Bowler19}} If $F=\K$ is the Krasner hyperfield, a $\K$-matroid is the same thing as a matroid in the usual sense.
\item {\cite[Example 3.33]{Baker-Bowler19}} If $F=\S$ is the sign hyperfield, an $\S$-matroid is the same thing as an {\bf oriented matroid}. 
\item {\cite[Example 3.32]{Baker-Bowler19}} If $F=\T$ is the tropical hyperfield, a $\T$-matroid is the same thing as a {\bf valuated matroid} in the sense of Dress and Wenzel. 

\end{enumerate}
\end{ex}

\begin{rem}\label{rem:F matroid cryptomorphisms}
    As is to be expected in matroid theory, there are many cryptomorphically equivalent ways to axiomatize $F$-matroids. The circuit/cocircuit axioms are most convenient for our purposes, but other options are available and the various equivalences are demonstrated in \cite{Baker-Bowler19}.
\end{rem}

\begin{df}
If $M$ is an $F$-matroid, the {\bf dual $F$-matroid} $M^*$ is the $F$-matroid obtained by replacing $\underline{M}$ with its dual matroid $\underline{M}^*$ and interchanging $F$-circuits and $F$-cocircuits.
\end{df} 

\begin{ex}
    In the case where $F = K$ is a field, $K$-matroid duality has a simple interpretation: it corresponds to interchanging the roles of $V$ and $V^\perp$ (cf.~\Cref{ex:FMatExs}(1)).
\end{ex}

\begin{df}\cite[p.~23]{Baker-Bowler19}
If $M$ is an $F$-matroid, the set $\CapVec(M)$ of {\bf $F$-vectors} of $M$ is the set of all $X \in F^n$ such that $X \perp C^*$ for every $F$-cocircuit $C^*$ of $M$.
Similarly, the set $\Cov(M)$ of {\bf $F$-covectors} of $M$ is the set of all $X \in F^n$ such that $X \perp C$ for every $F$-circuit $C$ of $M$.
\end{df}

\begin{ex}\label{ex:VecCov Examples}
Continuing our running examples:
    \begin{enumerate}
        \item {\cite[Proposition 2.19]{Anderson19}} When $F = K$ is a field, so that our $K$-matroid $M$ is identified with a subspace $V$, we have $\CapVec(M) = V^\perp$ and $\Cov(M) = V$. Thus if $A$ is a matrix representing $\underline{M}$, with null space $\mathrm{Null}(A)$ and row space $\mathrm{Row}(A)$, we have $\mathrm{Null}(A) = \CapVec(M)$ and $\mathrm{Row}(A) = \Cov(M)$.
        \item {\cite[Proposition 5.2]{Anderson19}} If $F=\K$ is the Krasner hyperfield, so that our $\K$-matroid $M$ is just a matroid in the usual sense, then $\CapVec(M)$ can be identified with unions of circuits of $M$ and $\Cov(M)$ with unions of cocircuits.
        \item {\cite[Definition 3.7.1]{Bjorner-LasVergnas-Sturmfels-White-Ziegler99}} If $F=\S$ is the sign hyperfield, then the $\S$-vectors (resp. covectors) are given by taking \textbf{conformal compositions} of $\S$-circuits $C_1,\dots,C_k$ (resp. covectors), which is defined coordinatewise on elements of $\S^{n}$ by
        \begin{align*}
            a\circ b = \begin{cases}
                a & \textrm{if } a \neq 0\\
                b & \textrm{if } a = 0.
            \end{cases}
        \end{align*}
        \item {\cite[Theorem 22]{Bowler-Pendavingh19}} If $F=\T$ is the tropical hyperfield, the $\T$-vectors (resp. $\T$-covectors) are given by taking the element-wise maximum of a finite collection of $\T$-circuits (resp. $\T$-cocircuits). 
\end{enumerate}

Thus one can profitably think of $F$-vectors (resp. $F$-covectors) of an $F$-matroid as a sort of ``$F$-linear closure'' of the $F$-circuits (resp. $F$-cocircuits). In the examples considered above, this can be taken somewhat literally in that the vectors are given by closing the set of circuits under and appropriate operation, but in general this need not be the case.
\end{ex}

\begin{df}\cite[Lemma 3.39]{Baker-Bowler19} \label{df:pushforward}
If $\varphi : F' \to F$ is a homomorphism of tracts and $M'$ is an $F'$-matroid, the {\bf push-forward} $\varphi_*(M')$ is the $F$-matroid on the same underlying matroid whose set of $F$-circuits (resp. $F$-cocircuits) is given by all subsets of $F^n$ of the form $c\varphi(X)$, where $X$ is an $F'$-circuit (resp. $F'$-cocircuit) of $M'$ and $c \in F^\times$. Note that this operation commutes with duality, hence $\varphi_*(M)^* = \varphi_*(M^*)$.
\end{df}


\begin{ex}
As discussed in \Cref{ex:FMatExs} (1), if $K$ is a field then $K$-matroids correspond to subspaces of $K^n$ by taking $V = \mathrm{Null}(A)$ for any matrix $A$ whose rows give representatives for the projective classes in $\cC(M)$. Hence if $\varphi : K \to \K$ is the canonical map from a field $K$ to the Krasner hyperfield, the circuits of the push-forward $\varphi_*(M)$ coincide exactly with the minimally supported non-zero elements of the null space of $A$. More generally, we could replace $A$ with any matrix having the same row space as $A$.
In other words, the collection of $K$-representable matroids, the matroids which appear as pushforwards of $K$-matroids with respect to $\varphi$, and the collection of matroids which admit the structure of a $K$-matroid are all exactly the same.
\end{ex}

\begin{rem}
\begin{enumerate}
    \item The fact that every tract $F$ admits a unique homomorphism to $\K$ thus implies that every $F$-matroid has a unique underlying matroid. 
    \item The fact that there is no morphism from $\K \to \S$ corresponds to the fact that not every matroid is orientable. 
    \item The fact that there is a natural embedding of $\K$ into $\T$ tells us that every matroid can be viewed (in a trivial way) as a valuated matroid.
\end{enumerate}
\end{rem}

\subsection{Reasoning About Vectors and Covectors}
Before moving on to the notions of rank that are the main focus of this paper, we remind the reader of some computational tools and hueristics for thinking about the vectors and covectors of an $F$-matroid.

Note that by \Cref{def: F matroid} (2), the $F$-circuits and $F$-cocircuits of an $F$-matroid $M$ are pairwise orthogonal, so we get inclusions $\cC(M) \subseteq \CapVec(M)$ and $\cC^*(M)\subseteq \Cov(M)$. In general, these inclusions are proper.

In \cite{Anderson19}, the author shows that the $F$-vectors and $F$-covectors of an $F$-matroid determine the $F$-matroid: 

\begin{thm}\cite[Theorem 2.18]{Anderson19}\label{thm:Circuits from Covectors}
    For any $F$-matroid $M$:
    \begin{align*}
        \cC(M) = \Minsupp\set{\CapVec(M) \setminus\set{0}} = \Minsupp\set{\Cov(M)^{\perp}\setminus\set{0}} \\
    \cC^*(M) = \Minsupp\set{\Cov(M)\setminus\set{0}} = \Minsupp\set{\CapVec(M)^{\perp}\setminus\set{0}},
    \end{align*}
    where by $\Minsupp(X)$ we mean the set of vectors in $X$ with minimal support.
\end{thm}

In particular, the $F$-circuits of $M$ are $F$-vectors and the $F$-cocircuits are $F$-vectors.

Mirroring our description of $F$-matroids in terms of their circuits and cocircuits, one cannot hope that \textit{any} collection of vectors forms the set of vectors or covectors for some $F$-matroid \cite[Proposition 2.10]{Anderson19}. In the special case of a field, it is enough to require that the vectors form a subspace and the covectors its orthogonal complement \cite[Proposition 2.19]{Anderson19}, and we will often specify a $K$-matroid by giving a matrix whose null space (resp. row space) gives the set of vectors (resp covectors). 
In the general case, one can take a collection of vectors $S \subseteq F^n$ (thought of as the rows of a matrix) and ask which $F$-matroids contain $S$ among their covectors. This will be a more intricate question to unravel, and form the basis for some of our notions of rank developed in the sequel.

\section{Several notions of rank for matrices over tracts}

Let $F$ be a tract.
In this section we define several different notions of rank for an $m \times n$ matrix $A$ with entries in $F$, and establish some inequalities between them.  As noted in our myriad examples, many of these definitions, specialized to one tract or another, have been of significant prior research interest.

\begin{df}
The {\bf column rank} of $A$, denoted $r_{\col}(A)$, is the maximum number of linearly independent columns of $A$. 
\end{df}

\begin{df}
The {\bf matroidal rank} of $A$, denoted $r_{\mat}(A)$, is the minimum rank of an $F$-matroid $M$ on $[n]$ such that every row of $A$ is a covector of $M$. 
\end{df}

Note that the {\bf Boolean matroid} $U_{n,n}$ on $[n]$ with $[n]$ as its unique basis can be given an $F$-matroid structure $M$ for any tract $F$, with $\cC(M) = \emptyset$ and $\cC^*(M)$ being the standard basis vectors and their multiples. We have $\Cov(M)=F^n$, and thus $r_{\mat}(A) \leq n$ for every $m \times n$ matrix $A$ with entries in $F$. In particular, the matroidal rank of $A$ is well-defined.

\begin{rem}
    If $F=K$ is a field and $A$ represents $\underline{M}$ over $K$, then necessarily every row of $A$ must be a covector of $M$. However the converse need not be the case; for example any matrix with $n$ columns has the property that every row is a $K$-covector of $U_{n,n}$, but not every such matrix represents $U_{n,n}$.
\end{rem}

\begin{rem}
One can also define the {\bf row rank} of $A$, denoted $r_{{\rm row}} (A)$, as the column rank of $A^T$, i.e., $r_{{\rm row}}(A) = r_{\col}(A^T)$.
Similarly, one can define the {\bf transpose matroidal rank} $r_{\tmat}(A) = r_{\mat}(A^T)$. In general, the row and column ranks of a matrix are not equal, nor are the matroidal and transpose matroidal ranks, cf. \Cref{rem:row rank versus column rank} and \Cref{rem:matroidal rank of transpose}.
\end{rem}

The basic general inequality we get is the following.

\begin{prop} \label{prop:col<mat}
$r_{\col}(A) \leq r_{\mat}(A)$, i.e., the column rank of $A$ is at most the matroidal rank of $A$. 
\end{prop}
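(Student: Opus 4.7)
The plan is to show that any collection of more than $r_{\mat}(A)$ columns of $A$ is linearly dependent, from which the inequality follows immediately. Set $r := r_{\mat}(A)$ and fix an $F$-matroid $M$ on $[n]$ of rank $r$ witnessing this — that is, every row $A_i$ of $A$ lies in $\Cov(M)$. Let $S \subseteq [n]$ be any subset with $|S| \geq r+1$; I want to produce coefficients $c_j \in F$, indexed by $j \in S$ and not all zero, such that $\sum_{j \in S} c_j A^j \in N_F^m$, where $A^j$ denotes the $j$-th column.

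The key step is to extract such coefficients from an $F$-circuit of $M$. Since $|S| > r = \rk(\underline{M})$, the set $S$ is dependent in $\underline{M}$ and therefore contains a circuit $\underline{C} \subseteq S$ of $\underline{M}$. By property (3) in the definition of an $F$-matroid, there exists an $F$-circuit $C \in \cC(M)$ whose support is exactly $\underline{C}$. In particular, $C_j = 0$ for every $j \notin S$, and $C_j \neq 0$ for at least one $j \in S$.

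Now I use the covector hypothesis. For each row $A_i$ of $A$, the fact that $A_i \in \Cov(M)$ together with property (4) of the definition of an $F$-matroid gives $A_i \perp C$, i.e.,
\[
\sum_{j=1}^n (A_i)_j C_j = \sum_{j \in S} (A_i)_j C_j \in N_F.
\]
Reading this coordinate-by-coordinate over $i = 1, \ldots, m$ is precisely the statement that $\sum_{j \in S} C_j A^j \in N_F^m$. Setting $c_j := C_j$ for $j \in S$ yields the desired non-trivial dependence among the columns indexed by $S$.

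Hence no set of $r+1$ or more columns of $A$ is linearly independent, so $r_{\col}(A) \leq r = r_{\mat}(A)$. I do not anticipate any real obstacle: the argument is a direct unpacking of the definitions of covector, $F$-circuit, and orthogonality, hinging on the basic matroid fact that a set larger than the rank contains a circuit.
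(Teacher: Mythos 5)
Your proof is correct and follows essentially the same route as the paper: choose $r+1$ columns, find a circuit of the matroid supported in that column set, lift it to an $F$-circuit, and pair it with the rows (which are covectors) to produce a nontrivial linear dependence. The only cosmetic differences are that the paper phrases the circuit-finding step via the restriction $M|_I$ while you work directly with a circuit of $\underline{M}$ contained in $S$ and axiom (3), and your appeal to property (4) is superfluous since the definition of $\Cov(M)$ already gives $A_i \perp C$.
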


\begin{proof}
Let $r$ denote the matroidal rank of $A$. It suffices to show that any $r+1$ columns of $A$ are linearly dependent.  Let $\underline{D}$ be any subset of the set $E = [n]$ of columns of $A$ of size $r+1$, and let $M$ be an $F$-matroid realizing the matroidal rank of $A$ (i.e. $M$ is rank $r$ and every row of $A$ is a covector of $M$). Since $\underline{M}$ has rank $r$ and $D$ has $r+1$ elements, we know that $\underline{D}$ must contain a circuit $\underline{C}$ of $\underline{M}$. By \Cref{def: F matroid} (3), there is a (projectively unique) non-zero $F$-circuit $C \in \cC(M)$ of $M$ with support equal to $\underline{C}\subseteq \underline{D}$. By construction, every row of $A$ is orthogonal to every $F$-circuit of $M$, so in particular for any $i \in [m]$,
\begin{align*}
    \sum_{j \in \underline{D}}C_jA_{i,j} = \sum_{j \in [n]} C_jA_{i,j}  \in N_F.
\end{align*}
Thus if we write $A_i$ for the $i$-th column of $A$ we have shown that the set $\set{A_i}_{i\in D}$ is dependent, proving the proposition.
\end{proof}

When $F$ is a field, these various notions of rank all agree and coincide with the ``usual'' notion of rank:
 
\begin{prop}\label{prop:FieldEquality}
If $F = K$ is a field and $A$ is a matrix with entries in $K$, then
\[
r_{{\rm row}}(A) = r_{\col}(A) = r_{\mat}(A) = r_{\tmat}(A).
\]
\end{prop}

The moral of \Cref{prop:col<mat} is that if the rows of $A$ are covectors for an $F$-matroid, then this is witnessed by an $F$-dependence among the columns of $A$. The converse is not true, as the next example shows (i.e., it is possible to have strict inequality in Proposition~\ref{prop:col<mat}):

\begin{ex}\label{exS}
Let $A$ be the following $3 \times 4$ matrix with entries in the sign hyperfield $\mathbb S$, which has $r_{\col}(A) = 2$:

\[
\begin{pmatrix}
1 & -1 & 1 & 1 \\
1 & 1 & -1 & 1 \\
1 & 1 & 1 & -1 
\end{pmatrix}
\]

Suppose there exists a rank 2 oriented matroid $M$ such that the rows of $A$ are covectors of $M$. If $\underline{M}$ is not $U_{2,4}$, then it would have a circuit of rank at most $2$, which would imply that there is an $\S$-linear dependence among at most two columns of $A$, but this is clearly not the case. Hence $\underline{M} = U_{2,4}$, and we will find a contradiction by analyzing the $\S$-circuits.

Since there is a unique $\S$-linear combination of any three columns giving a linear dependence, we compute that $\cC(M)=\{(0,1,1,1),(-1,0,1,1),(-1,1,0,1),(-1,1,1,0)\}$. Next we claim that any three $\S-$circuits of $M$ have to be $\S$-linearly dependent. This follows from \cite[Theorem 2.16]{BakerLorscheidFoundationsI}, with $\underline{M} = U_{2,4}$ and $P = \S$. Because this translation is slightly non-obvious and these objects may be unfamiliar to some readers, we give a careful accounting of this translation; the reader familiar with this material can skip to the last paragraph of this example. 

To begin with, \cite[Theorem 2.16]{BakerLorscheidFoundationsI} is written in terms of \textbf{pastures}, a subclass of tracts containing $\S$. Moving their result closer to the language of this paper and example, they exhibit a bijection:
\begin{align*}
    \set{\S-\text{representations of }U_{2,4}} \xrightarrow{\Xi} \set{\text{Modular systems of } \S-\text{hyperplanes for }U_{2,4}}.
\end{align*}
On the left-hand side, an $F$-representation of $\underline{M}$ is the same thing as a $F$-matroid $M$ with underlying matroid $\underline{M}$\footnote{Actually \cite{BakerLorscheidFoundationsI} is written with a different axiomitization using hyperplanes, but these are equivalent, see \Cref{rem:F matroid cryptomorphisms}.}, so for us the left-hand side is the collection of $\S$-matroids with underlying matroid $U_{2,4}$. 

On the right-hand side, writing $\underline{\cH}$ for the set of hyperplanes of $\underline{M}$, we say that a triple of hyperplanes $(\underline{H}_1,\underline{H}_2,\underline{H}_3)\in \underline{\cH}^3$ is \textbf{modular} if $\underline{G}:=\underline{H}_1\cap \underline{H}_2\cap \underline{H}_3$ is a flat of corank $2$ and $\underline{H}_i\cap \underline{H}_j = \underline{G}$ for all $i\neq j \in \set{1,2,3}$. The hyperplanes of $U_{2,4}$ are precisely the singleton sets $\set{i}, i \in [4]$ so any triple of distinct hyperplanes is modular. A \textbf{modular system of $F$-hyperplanes} for $\underline{M}$ is a collection of vectors\footnote{Written as functions $f_H:E\to F$ in \cite{BakerLorscheidFoundationsI}} $\set{H}_{\underline{H}\in \underline{\cH}}\subseteq F^n$ such that $\supp(H) = \underline{H}$ and if $(\underline{H}_1,\underline{H}_2,\underline{H}_3) \in \underline{\cH}^3$ are modular, then $H_1,H_2,H_3$ are $F$-linearly dependent. Thus in our example, a modular system of $\S$-hyperplanes for $U_{2,4}$ is a collection of vectors $H_1,\dots,H_4 \in \S^4$ such that $\supp(H_i) = \set{i}$ and any three distinct vectors are $\S$-linearly dependent. 

Finally we turn to the map $\Xi$ taking $M$ to a modular system of $\S$-hyperplanes for $\underline{M}$. For each hyperplane $\underline{H} \in \underline{\cH}$, the complement is a cocircuit $\underline{C}^* \in \cC^*(M)$ and the map $\Xi$ associates to $\underline{H}$ the projectively unique $F$-cocircuit with support $\underline{C}^*$ and \cite{BakerLorscheidFoundationsI} asserts that this assignment yields a modular system. What's special in this case is that for $U_{2,4}$, hyperplane complements, i.e. cocircuits, are exactly the same as circuits. Thus for any $\S$-matroid $M$ with underlying matroid $U_{2,4}$, $\Xi$ sends $M$ to a (projectively unique) collection of \textit{$\S$-circuits} $\Xi(M)$ containing a representative supporting every circuit of $\underline{M}$. In other words, up to projective equivalence, $\Xi(M)$ is exactly $\cC(M)$. Since every triple of distinct hyperplanes is modular, every subset of $\Xi(M)$ of size $3$ must be $\S$-linearly dependent, i.e. any three $\S$-circuits of $M$ are $\S$-linearly depdenent.

However, it is easy to check that $\{(0,1,1,1),(-1,0,1,1),(-1,1,0,1)\}$ are not $\S$-linearly dependent, thus there can be no $\S$-matroid $M$ of rank $2$ such that every row of $A$ is a covector of $M$, i.e., $r_{\mat}(A)\geq 3$. One can check that the alternating oriented matroid $C^{4,3}$ (see \Cref{def:alt ori mat}) is a rank $3$ matroid which has every row of $A$ as a covector, so in fact $r_{\mat}(A) = 3$.

\medskip


\end{ex}

\begin{rem} \label{rem:row rank versus column rank}
Example~\ref{exS} also shows that in general $r_{\col}(A) \neq r_{{\rm row}} (A)$. Indeed, the rows of $A$ are $\S$-linearly independent, so $r_{\col}(A^T)=r_{\mat}(A^T)=3$. 

For another example, let $P$ be the regular partial field (cf.~Example~\ref{ex:regular partial field})
and let $A$ be the following matrix with entries in $P$:
\[
\begin{pmatrix}
 1 & -1 & -1 & -1 \\
 1 &  0 &  1 & -1 \\
 1 &  1 &  1 &  1 
\end{pmatrix}
\]
Then one can check that the columns are linearly independent over $P$, hence by \Cref{prop:col<mat} $r_{\col}(A) = r_{\mat}(A)=4$. On the other hand, the rows are linearly independent over $P$ so by the same logic $r_{{\rm row}}(A) = r_{\mat}(A^T)=3$. 
\end{rem}

The following example shows that in general we do {\bf not} have $r_{{\rm row}} (A) \leq r_{\mat}(A)$:

\begin{ex} \label{ex:row and matroidal ranks not equal} 
Let $A$ be the following $3 \times 4$ matrix with entries in the phase hyperfield $\P$:
\[
\begin{pmatrix}
 1 & 1+i & 1 & 0 \\
 2+i &  1+4i &  1 & 1 \\
 2+i &  1+5i &  1 &  1 
\end{pmatrix}
\]
In \cite[Example 4.7, discussion on page 19]{Anderson19}, Anderson constructs a $\P$-matroid $M$ which has the rows of $M$ as covectors. Using \Cref{thm:Circuits from Covectors}, we can recover the $\P$-circuits:
\begin{align*}
    \cC(M) = \set{(-1,1-i,1,0),(1-i,i,0,1),(1-i,0,i-1,-1),(0,i,1-i,1)}.
\end{align*}
From this description it is clear that the rows of $A$ indeed are $\P$-covectors and that the underlying matroid is $U_{2,4}$, showing $r_{\mat}(A) \leq 2$. Conversely, since the rows of $A$ are $\P$-linearly independent, $r_{\col}(A^T) = r_{{\rm row}}(A)=3$. 
\end{ex}

\begin{rem} \label{rem:matroidal rank of transpose}
Examples \ref{rem:row rank versus column rank} and \ref{ex:row and matroidal ranks not equal} also show that in general $r_{\mat}(A) \neq r_{\tmat}(A)$. 
\end{rem}

We now give the promised proof of Proposition~\ref{prop:FieldEquality}.

\begin{proof}[Proof of Proposition~\ref{prop:FieldEquality}]
It is well-known that $r_{{\rm row}}(A) = r_{\col}(A)$, and $r_{\col}(A)\leq r_{\mat}(A)$ due to \Cref{prop:col<mat}. Hence, it is sufficient to show $r_{\mat}(A) \leq r_{{\rm row}}(A)$, or in other words to exhibit a $K$-matroid of rank $r_{{\rm row}}(A)$ such that every row of $A$ is a $K$-covector. If we let $V$ denote the row space of $A$, then by \Cref{ex:VecCov Examples} (1) we obtain a $K$-matroid $M$ of rank $r_{{\rm row}}(A)$ which has as $V$ as its set of $K$-covectors. The row span of $A$ contains the rows of $A$, 
hence $r_{\mat}(A) \leq r_{{\rm row}}(A)$.



\end{proof}

\begin{rem} \label{rem:matroidalrankequalsDeaettrank}
When $F=\K$ is the Krasner hyperfield, the matroidal rank of a matrix $A$ over $\K$ coincides with a notion of rank introduced by Deaett \cite{Deaett}.
Indeed, given an $m \times n$ zero-non-zero matrix pattern $A$, Deaett defines $R(A)$ to be the the collection of all matroids $M$ on ground set $\{1,\ldots,n \}$ such that for each row $R$ of $A$, the set of zero positions of $R$ is a flat of $M$.
Deaett then defines ${\rm mr} R(A)$ to be the minimum rank of a matroid in $R(A)$. 

To see that this coincides with our notion of matroidal rank,
first recall that a $\K$-cocircuit is an ordinary matroid-theoretic cocircuit, 
and the same applies to the other objects in this setting. A flat is an intersection of hyperplanes, a hyperplane is the complement of a cocircuit, and a covector is a union of cocircuits, so the complement of a flat of $M$ is the same thing as a covector of $M$. From this, it follows easily that $r_{\mat}(A) = {\rm mr} R(A)$.  
\end{rem}

\begin{rem} \label{rem:Deaett}
We have already seen in Remark~\ref{rem:matroidal rank of transpose} that in general one does not have $r_{\mat}(A)=r_{\mat}(A^T)$. Example 25 from \cite{Deaett}, when combined with
Remark~\ref{rem:matroidalrankequalsDeaettrank}, provides yet another example. Indeed, according to  \cite[Example 25]{Deaett}, the $8 \times 7$ matrix
\[
\cX = 
\begin{pmatrix}
1 & 0 & 1 & 0 & 1 & 0 & 1 \\
1 & 0 & 0 & 1 & 0 & 1 & 1 \\
1 & 1 & 0 & 0 & 1 & 1 & 0 \\
1 & 1 & 1 & 1 & 0 & 0 & 0 \\
1 & 1 & 1 & 1 & 0 & 0 & 1 \\
0 & 1 & 0 & 0 & 1 & 1 & 0 \\
0 & 0 & 1 & 0 & 1 & 0 & 1 \\
0 & 0 & 0 & 1 & 0 & 1 & 1 \\
\end{pmatrix}
\]
over $\K$ has $ 4 = r_{\mat}(\cX^T) < r_{\mat}(\cX)$.

Given a zero-non-zero pattern $A$, Deaett defines $r_{{\rm tri}}(A)$ to be the maximum $r$ such that we can permute the rows and columns of $A$ in order to obtain a matrix with an $r\times r$ upper-triangular submatrix. 
In particular, $r_{{\rm tri}}(A)=r_{{\rm tri}}(A^T)$ and $r_{{\rm tri}}(A) \leq \min \{ r_{\col}(A), r_{{\rm row}}(A) \}$.
For the matrix above, Deaett shows that $r_{{\rm tri}}(\cX) =4$, so both $r_{\col}(\cX)$ and $r_{{\rm row}}(\cX)$ are at least 4.
Moreover, $r_{\mat}(\cX^T)=4$, so $r_{{\rm row}}(\cX)=4$ and hence $r_{\col}(\cX) = 4$ as well.
This provides another example where we have strict inequality in Proposition~\ref{prop:col<mat}.
\end{rem}

\section{Relative notions of rank}

Suppose $\varphi : F' \to F$ is a homomorphism of tracts. Given a matrix $A$ over $F$, we will define some additional notions of rank which depend on the map $\varphi$ and not just on $F$.

\begin{df}\label{df:pushforwardrank}
The {\bf $\varphi$-matroidal rank} of $A$, denoted $r_{\varphi\text{-}{\mat}}(A)$, is the minimum rank of an $F'$-matroid $M'$ such that every row of $A$ is a covector of $\varphi_*(M')$. 
\end{df}


\begin{df} \label{df:minrank}
We say that a matrix $A'$ over $F'$ is a {\bf lift} of $A$ relative to $\varphi$ if $\varphi(A')=A$. We define $r_{\mat}(\varphi^{-1}(A))$ to be the minimum matroidal rank of a lift of $A$ (or $+\infty$ if $A$ does not lift):
\begin{align*}
    r_{\mat}(\varphi^{-1}(A)) = \min \set{r_{\mat}(A'):\varphi(A') = A}.
\end{align*}
\end{df}

\begin{rem}
By replacing $r_{\mat}$ with a different notion of rank for matrices over $F'$ (e.g. $r_{\col}$), we get other relative rank functions. If $F'$ is a field, we sometimes write $r(\varphi^{-1}(A))$ instead of $r_{\mat}(\varphi^{-1}(A))$ since all of the basic rank functions agree.
\end{rem}

The basic inequality which makes these notions of interest is:

\begin{prop} \label{prop:basic relative inequality}
$r_{\mat}(\varphi^{-1}(A)) \geq r_{\varphi\text{-}\mat}(A) \geq r_{\mat} (A)$.
\end{prop}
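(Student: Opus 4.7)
The plan is to prove each of the two inequalities separately; both reduce to unwinding definitions once one records the key observation that a homomorphism of tracts preserves orthogonality. Concretely, if $X,Y\in (F')^n$ satisfy $\sum_i X_iY_i \in N_{F'}$, then applying $\varphi$ termwise yields
\[
\sum_i \varphi(X_i)\varphi(Y_i) \;=\; \varphi\!\Bigl(\sum_i X_i Y_i\Bigr)\;\in\; N_F,
\]
since $\varphi$ is multiplicative, sends $0$ to $0$, and carries $N_{F'}$ into $N_F$. Combined with axiom (T3), this gives $\varphi(X) \perp c\varphi(Y)$ in $F$ for every $c\in F^\times$.

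For the right-hand inequality $r_{\varphi\text{-}\mat}(A) \geq r_{\mat}(A)$, I would pick an $F'$-matroid $M'$ of rank $r = r_{\varphi\text{-}\mat}(A)$ such that every row of $A$ is a covector of $\varphi_*(M')$. Since the pushforward $\varphi_*(M')$ has underlying matroid $\underline{M'}$, it is an $F$-matroid of rank $r$ with every row of $A$ as a covector, so it witnesses $r_{\mat}(A) \leq r$.

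For the left-hand inequality $r_{\mat}(\varphi^{-1}(A)) \geq r_{\varphi\text{-}\mat}(A)$, I would first assume $A$ admits a lift (otherwise the LHS is $+\infty$ and the inequality is vacuous). Let $A'$ be a lift minimizing $r_{\mat}(A')$, and let $M'$ be an $F'$-matroid of rank $r_{\mat}(A')$ such that each row $R'$ of $A'$ is an $F'$-covector of $M'$. The only thing to verify is that each row $R = \varphi(R')$ of $A$ is an $F$-covector of $\varphi_*(M')$. By Definition~\ref{df:pushforward}, the $F$-cocircuits of $\varphi_*(M')$ are exactly the vectors $c\varphi(C^*)$ with $C^*$ an $F'$-cocircuit of $M'$ and $c\in F^\times$. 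Since $R' \perp C^*$ in $F'$, the orthogonality-transport observation above gives $\varphi(R') \perp c\varphi(C^*)$ in $F$, proving that $R$ is indeed a covector of $\varphi_*(M')$. Hence $M'$ witnesses $r_{\varphi\text{-}\mat}(A) \leq r_{\mat}(A')$.

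There is no real obstacle in this proof; the argument is essentially bookkeeping. The only points that require a moment's attention are (i) that the pushforward leaves the underlying matroid (and hence the rank) unchanged, so ranks match across $\varphi_*$, and (ii) that the $F^\times$-scalars appearing in the definition of the cocircuits of $\varphi_*(M')$ are handled by axiom (T3) of a tract.
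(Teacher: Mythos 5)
Your proof follows the same route as the paper's: the second inequality is witnessed by $\varphi_*(M')$ itself (whose rank equals that of $M'$), and the first by pushing forward a matroid that witnesses the matroidal rank of a minimizing lift; your orthogonality-transport observation simply makes explicit a step the paper leaves implicit in its appeal to Definition~\ref{df:pushforward}. One slip in your write-up of the left-hand inequality: covectors are defined by orthogonality to \emph{circuits}, not cocircuits, so you should check $R=\varphi(R')$ against the $F$-circuits $c\,\varphi(C)$ of $\varphi_*(M')$, with $C$ an $F'$-circuit of $M'$. As written, the claim ``$R'\perp C^*$ for every $F'$-cocircuit $C^*$'' is not part of your hypothesis---a covector need not be orthogonal to cocircuits unless the tract is perfect. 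Replacing ``cocircuit'' by ``circuit'' in that sentence, the verification is exactly the transport computation you set up, and the proof is correct.
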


\begin{proof}

For the first inequality, let $A'$ be any lift of $A$ achieving the minimum in \Cref{df:minrank} (if no lift exists, there is nothing to prove). By definition, there exists an $F'$-matroid $M'$ of rank $r_{\mat}(\varphi^{-1}(A))$ such that every row of $A'$ is a covector of $M'$. 
By \Cref{df:pushforward}, every row of $A$ is a covector of $\varphi_*(M')$. We therefore have $r_{\mat}(\varphi^{-1}(A)) = r(M') \geq r_{\varphi\text{-}\mat}(A)$. 

For the second inequality, by \Cref{df:pushforwardrank} there exists an $F'$-matroid $M'$ such that rank($M')=r_{\varphi\text{-mat}}(A)$ and every row of $A$ is a covector of $\varphi_*(M')$. The push-forward $\varphi_*(M')$ is an $F$-matroid of rank $r_{\varphi\text{-mat}}(A)$ such that every row of $A$ is a covector of $\varphi_*(M')$. Hence, $r_{\mat}(A) \leq r (\varphi_*(M')) = r_{\varphi\text{-mat}}(A)$.
\end{proof}

Morally, \Cref{prop:basic relative inequality} says that $r_{\varphi\text{-}\mat}(A)$ is a lower bound for the rank of any lift $A'$ of $A$, and this bound is better than the one we would obtain by just using the absolute rank $r_{\mat}(A)$ (or, say, the column rank $r_{\col}(A)$ of $A$, which would give an even worse bound).

\medskip
We can use \Cref{prop:basic relative inequality} to demonstrate the failure of $r_{\mat}(A) = r_{\mat}(A^T)$ in the case of the sign hyperfield $\S$.

\begin{ex}
    Consider the following $8 \times 7$ matrix over $\S$:
    \begin{align*}
        A = \begin{pmatrix}
            -1 & 0  & 1  & 0  & 1 & 0 & 1 \\
            -1 & 0  & 0  & 1  & 0 & 1 & -1 \\
            1  & 1  & 0  & 0  & -1& -1& 0 \\
            1  & -1 & -1 & -1 & 0 & 0 & 0 \\
            1  & 1  & 1  & 1  & 0 & 0 & -1 \\
            0  & -1 & 0  & 0  & 1 & 1 & 0 \\
            0  & 0  & -1 & 0  & -1& 0 & -1 \\
            0  & 0  & 0  & -1 & 0 & -1& 1 \\
        \end{pmatrix}.
    \end{align*}
    First we claim that $r_{\mat}(A^T) = 4$. To see this, note that every column of $A$ is a cocircuit (hence covector) of the dual of the $\S$-matroid structure on the Vámos matroid given in \cite[Example 3.10]{Bland-LasVergnas78}, under the permutation $(3 5 8 4 7) \in S_8$, thus $r_{\mat}(A^T)\leq 4$.

    Let $\varphi:\S \to \K$ denote the unique homomorphism to the Krasner hyperfield. Then $\varphi(A)$ is the same matrix $\cX$ as in \Cref{rem:Deaett}, so by the discussion in that remark, $r_{\mat}(\varphi(A^T)) = r_{\mat}(\cX^T) = 4$. Hence by \Cref{prop:basic relative inequality} we get
    \begin{align*}
        4 = r_{\mat}(\cX^T) \leq r_{\mat}(\varphi^{-1}(\cX^T)) \leq r_{\mat}(A^T) \leq 4.
    \end{align*}

    On the other hand, again following \Cref{rem:Deaett} and applying \Cref{prop:basic relative inequality}, we see that
    \begin{align*}
        4< r_{\mat}(\cX) \leq r_{\mat}(\varphi^{-1}(\cX)) \leq r_{\mat}(A).
    \end{align*}
\end{ex}





\begin{rem}\label{rem: ranks for tropical hyperfields}
For the natural homomorphism $\varphi: \C\{\!\{T\}\!\} \to \T$, we always have the equality $r(\varphi^{-1}(A)) = r_{\varphi\text{-}\mat}(A)$, cf.~\Cref{prop:valuation is epic} below. 
In the tropical algebra literature, this quantity is called the {\bf Kapranov rank} of $A$ relative to the ground field $\C$. More precisely, the Kapranov rank of $A$ is defined to be $r_{\varphi\text{-}\mat}(A^T)$ in \cite[Definition 1.2]{Develin2004} (see section 7 of \cite{Develin2004} for details on the translation), and \cite[Theorem 3.3, Theorem 7.3]{Develin2004} yields  $r(\varphi^{-1}(A)) = r_{\varphi\text{-}\mat}(A^T) = r_{\varphi\text{-}\mat}(A)$. 

As in \cite[Definition 3.9]{Develin2004}, one can change the ground field to obtain a different notion of Kapranov rank. In \cite[Definition 5.3.2]{MaclaganSturmfels}, one finds a notion of Kapranov rank that does not depend on the choice of a valued field $K$; it is defined as the minimum Kapranov rank over all such $K$. By \cite[Theorem 5.3.21]{MaclaganSturmfels}, this Kapranov rank is not necessarily equal to $r_{\mat}(A)$. For example, one can choose $A$ to have rows given by the $\T$-cocircuits of $M$, the non-Pappus matroid given the trivial $\T$-matroid structure. Since $\underline{M}$ is not representable over any field, the Kapranov rank will necessarily be higher than the matroidal rank since a larger rank matroid will be required to push forward. 

For a matrix $A$ over $\T$, there are (at least) two other notions of rank in the literature, namely the {\bf Barvinok rank} \cite[Definition 5.3.1]{MaclaganSturmfels} and the {\bf tropical rank} \cite[Definition 5.3.3]{MaclaganSturmfels}. The tropical rank coincides with the {\bf determinantal rank} defined in \Cref{def: det rank} below. There is a well-known inequality \cite[Theorem 5.3.4]{MaclaganSturmfels} which says that ${\rm tropical \ rank} (A)\leq {\rm Kapranov \ rank} (A) \leq {\rm Barvinok \ rank}(A)$.
\end{rem}
In general, both inequalities in Proposition~\ref{prop:basic relative inequality} can be strict, as the next two examples show.
\begin{ex}\label{ex:Fano T matroid example}
Let $\C\{\!\{T\}\!\}$ denote the field of Puiseux series in the variable $T$. Let $\varphi: \C\{\!\{T\}\!\} \to \T$ be the natural homomorphism of tracts described in \Cref{ex:Tropical}. Let $A$ be the following matrix over $\T$:
\begin{align*}
    \begin{pmatrix}
0 & 0 & 0 & 1 & 1 & 1 & 1 \\
0 & 1 & 1 & 0 & 1 & 1 & 0 \\
0 & 1 & 1 & 1 & 0 & 0 & 1 \\
1 & 0 & 1 & 0 & 1 & 0 & 1 \\
1 & 0 & 1 & 1 & 0 & 1 & 0 \\
1 & 1 & 0 & 0 & 0 & 1 & 1 \\
1 & 1 & 0 & 1 & 1 & 0 & 0
    \end{pmatrix}
\end{align*}
Consider the Fano matroid, viewed in the tautological way as a valuated matroid $M$. 
One can explicitly verify that every row of $A$ is a $\T$-covector of $M$ and that the first 3 columns of $A$ are linearly independent. Using \Cref{prop:col<mat}, it follows that $r_{\mat}(A) = 3$, whereas $r_{\varphi\text{-}\mat}(A) > 3$ (since $\C\{\!\{T\}\!\}$ is a field).

\end{ex}

\begin{ex} [cf.~{\cite[Example 30]{Deaett}}]
Consider the natural homomorphism $\varphi: \F_2 \to \K$, and let $A$ be the following matrix over $\K$:
\[
\begin{pmatrix}
1&0&0&0\\
0&1&0&1\\
0&0&1&1\\
0&1&1&1
\end{pmatrix}
\]

Then $r_{\mat}(\varphi^{-1}(A))=4$, because $\varphi^{-1}(A)$ is a singleton. However, $r_{\varphi\text{-}\mat}(A) = 3$, because one can take $M'$ to be the rank 3 $\F_2$-matroid $M$ represented by the following matrix:
\[
\begin{pmatrix}
1&0&0&0\\
0&1&0&1\\
0&0&1&1\\
\end{pmatrix}.
\]
Recall that by \Cref{ex:VecCov Examples} the $\F_2$-covectors of $M$ are given by the row space of this matrix and the $\F_2$-vectors are given by the null space. By \Cref{thm:Circuits from Covectors}, $M'$ has the $\F_2$-circuit $(0,1,1,1)$. Note that $(0,1,1,1)$ is not a covector of $M'$, but it is a covector of $\varphi_*(M')$ (recall that the null set of $\K$ contains all sums with more than one non-zero entry). 
\end{ex}

What goes wrong in the previous example is that the natural map $\Cov(M') \to \Cov(\varphi_* (M'))$ (i.e., the one given by applying $\varphi$ elementwise to the covectors of $M'$) is not surjective. 
We now show that when the map on covectors is surjective, the situation is nicer. For this, it is convenient to introduce the following definition:

\begin{df}
A homomorphism $\varphi : F' \to F$ of tracts is {\bf epic}\footnote{Not to be confused with the category-theoretic usage of the word ``epic'', which simply means an epimorphism.} if the natural map $\CapVec(M') \to \CapVec(\varphi_* (M'))$
is surjective for every $F'$-matroid $M'$. 
(By duality, this holds iff the natural map $\Cov(M') \to \Cov(\varphi_* (M'))$ is surjective for every $F'$-matroid $M'$.)
\end{df}

Our proof of the following result is inspired by \cite[Theorem 28]{Deaett}.

\begin{thm} \label{thm:epic}
Let $K$ be a field and let $F$ be a tract. If $\varphi : K \to F$ is epic then $r_{\mat}(\varphi^{-1}(A)) = r_{\varphi\text{-}\mat}(A)$ for every matrix $A$ with entries in $F$.
\end{thm}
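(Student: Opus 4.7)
The plan is to prove the non-trivial direction $r_{\mat}(\varphi^{-1}(A)) \leq r_{\varphi\text{-}\mat}(A)$, since the reverse inequality is already delivered by \Cref{prop:basic relative inequality}. The key observation is that when $F' = K$ is a field, we have very concrete control over both the structure of $K$-matroids (they are just linear subspaces of $K^n$) and the relationship between their various rank notions (by \Cref{prop:FieldEquality}).

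First I would fix a $K$-matroid $M'$ that realizes the $\varphi$-matroidal rank of $A$: by definition $M'$ has rank $r := r_{\varphi\text{-}\mat}(A)$, and every row of $A$ is a covector of $\varphi_*(M')$. Since $K$ is a field, $M'$ corresponds to an $r$-dimensional subspace $L \subseteq K^n$, and one checks directly from the definitions that $\Cov(M') = L$ (the elements orthogonal to every cocircuit are exactly the elements of $L$).

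Next, I would invoke the epic hypothesis. By assumption, the natural map $\Cov(M') \to \Cov(\varphi_*(M'))$ is surjective, so for each row $A_i$ of $A$, viewed as a covector of $\varphi_*(M')$, we can choose a preimage $\tilde{A}_i \in \Cov(M') = L$ with $\varphi(\tilde{A}_i) = A_i$. Stacking these preimages row-by-row produces a matrix $\tilde{A}$ over $K$ with $\varphi(\tilde{A}) = A$, i.e., $\tilde{A} \in \varphi^{-1}(A)$.

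Finally, since every row of $\tilde{A}$ lies in the $r$-dimensional subspace $L$, the row space of $\tilde{A}$ has dimension at most $r$, so $r_{\row}(\tilde{A}) \leq r$. By \Cref{prop:FieldEquality} applied over the field $K$, we conclude $r_{\mat}(\tilde{A}) = r_{\row}(\tilde{A}) \leq r$. Hence $r_{\mat}(\varphi^{-1}(A)) \leq r_{\mat}(\tilde{A}) \leq r = r_{\varphi\text{-}\mat}(A)$, completing the proof. I do not expect a real obstacle here: once the epic hypothesis is unpacked and we remember that covectors of a $K$-matroid form the whole associated linear subspace, the lifting is essentially automatic and \Cref{prop:FieldEquality} bridges $r_{\row}$ and $r_{\mat}$ on the lifted side.
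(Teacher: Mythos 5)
Your argument is correct and is essentially the paper's own proof: the paper realizes the covector space of the rank-$r$ $K$-matroid as $\ker(B')$ for a matrix $B'$ representing the dual matroid and then lifts the rows of $A$ into it via the epic hypothesis, which is exactly your step of lifting each row into the $r$-dimensional subspace $L=\Cov(M')$ and bounding the rank of the stacked lift by $r$ (your use of \Cref{prop:FieldEquality} replaces the paper's rank--nullity computation). One small slip worth fixing: your parenthetical justification of $\Cov(M')=L$ is backwards --- by the paper's definitions the covectors are the elements orthogonal to every \emph{circuit}, while orthogonality to every cocircuit defines the vectors, which form $L^{\perp}$ --- but the identity $\Cov(M')=L$ itself is correct and is precisely the fact used in the proof of \Cref{prop:FieldEquality}.
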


\begin{proof}
By \Cref{prop:basic relative inequality}, it suffices to show that $r_{\mat}(\varphi^{-1}(A)) \leq r_{\varphi\text{-}\mat}(A)$.
Suppose there exists a $K$-matroid $M$ of rank $r$ such that every row of $A$ is a covector of $\varphi_* (M)$. 
We want to prove that there exists a lift $A'$ of $A$ with $r(A') \leq r$.

\medskip

Since $M$ is a $K$-matroid, we can find a matrix representation $B$ over $K$, i.e., $B$ is a rank $r$ matrix over $K$ such that $M[B] = \underline{M}$. Recall that this means that $\Cov(M) = \row(B)$. However, this does not imply that $\varphi(B) = A$. For any row $v$ of $A$, we know that $v \in \Cov(\varphi_*(M))$, and since $\varphi$ is epic we can find $v' \in \Cov(M) = \row(B)$ such that $\varphi(v') = v$. By doing this for each row $v$ of $A$, we can construct a matrix $A'$ such that $\varphi(A') = A$. Since the rows of $A'$ belong to the row space of $B$ by construction, it follows that
\begin{align*}
    r(A') \leq \dim \row(B) = r(\underline{M}) = r.
\end{align*}
\end{proof} 

The question of whether a given homomorphism $\varphi : K \to F$ is epic or not seems subtle.
For example, the natural map $\varphi : \C \to \P$ is {\bf not} epic, as the following example shows:

\begin{ex} \cite[Section 6.2]{AndersonDelucchi12}
Let $M$ be the $\C$-matroid with underlying matroid $U_{2,4}$ and $\C$-covectors given by the row space of the following matrix:
\[
\begin{pmatrix}
1   & 1+i & 1 & 0 \\
1+i & 4i  & 0 & 1 
\end{pmatrix}.
\]
Let $\varphi : \C \to \P$ be the natural map. Then the $\C$-circuits of $M$ are given by the non-zero vectors of minimal support in the null space of the matrix, namely (up to rescaling): 
\begin{align*}
    \cC(M)= \set{(0,1,-1-i,-4i),(1,0,-1,-1-i),(2,-1+i,0,2+2i),(4,-1+i,-2,0)}.
\end{align*}
Then one can check that $\varphi((2+i, 1+4i,1,1))$ is a $\P$-covector of the pushforward $\varphi_*(M)$, but $(2+i, 1+4i,1,1)$ is not in the row span of the above matrix, hence $\varphi((2+i, 1+4i,1,1)$ is not the image of a $\C$-covector of $M$. Thus the induced map from $\Cov(M)$ to $\Cov(\varphi_* (M))$ is not surjective.
\end{ex}

On the positive side, the proof of {\cite[Theorem 28]{Deaett}} immediately gives:

\begin{lemma}
If $K$ is an infinite field, the canonical map $\varphi : K \to \K$ is epic.
\end{lemma}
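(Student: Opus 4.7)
The plan is to unwind the definition: to say $\varphi$ is epic means that for every $K$-matroid $M'$, every $\K$-vector of $\varphi_*(M')$ lifts to a $K$-vector in $\CapVec(M')$. Over a field $K$, $\CapVec(M')$ is a $K$-linear subspace of $K^n$, and a $\K$-vector of the underlying matroid $\underline{M'}$ is simply a subset $S \subseteq [n]$ that is a union of circuits of $\underline{M'}$. So the whole claim reduces to the following: for any union $S = C_1 \cup \cdots \cup C_k$ of circuits of $\underline{M'}$, produce $v \in \CapVec(M')$ with $\supp(v) = S$, since then $\varphi(v)$ is the desired lift of the characteristic function of $S$.

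To build such a $v$, for each $i$ I would use the $F$-matroid axioms to pick a $K$-circuit $v_i \in \CapVec(M')$ with $\supp(v_i) = C_i$, and then form the $K$-linear family
\[
v_\lambda = \sum_{i=1}^k \lambda_i v_i, \qquad \lambda = (\lambda_1, \ldots, \lambda_k) \in K^k.
\]
Each $v_\lambda$ automatically lies in $\CapVec(M')$ (because over a field this set is a linear subspace) and has support contained in $S$. The task then becomes to force equality of supports. For each $j \in S$, the $j$-th coordinate of $v_\lambda$ is the linear form $p_j(\lambda) = \sum_{i \colon j \in C_i} \lambda_i (v_i)_j$, which is a \emph{nonzero} polynomial in $\lambda$ because, by construction, $(v_i)_j \neq 0$ whenever $j \in C_i$.

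The final step is the only one that uses $|K| = \infty$: since $S$ is finite and each $\{p_j = 0\}$ is a proper linear subspace of $K^k$, the finite union $\bigcup_{j \in S} \{p_j = 0\}$ does not exhaust $K^k$, so one can choose $\lambda$ avoiding all of these hyperplanes. The resulting $v_\lambda \in \CapVec(M')$ has support exactly $S$, which completes the proof. The only thing to be careful about is the translation layer — switching between vectors and covectors, and remembering that for a $K$-matroid $\CapVec(M')$ is closed under addition — but these are routine over a field. The genuine content is the generic-choice argument, and this is precisely where $|K| = \infty$ is indispensable: over a finite field one cannot in general guarantee that some $K$-linear combination of the $v_i$ achieves the maximal support $S$.
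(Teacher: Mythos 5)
Your argument is correct and is essentially the paper's own proof: the paper likewise reduces to lifting a union of circuits by choosing $K$-circuits above each underlying circuit and taking a ``suitably general'' linear combination, with the infinitude of $K$ used exactly where you use it (avoiding the finitely many hyperplanes $p_j=0$ so the combination attains the full support). Your write-up just makes the genericity step explicit; no gap.
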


The main idea of the proof is that if $M'$ is a $K$-matroid with underlying matroid $M$ (i.e. $\varphi_*(M') = M$), the map from $K$-circuits of $W$ to circuits of $M$ is surjective by definition (cf.~Definition~\ref{df:pushforward}). Recall that a vector of a $\K$-matroid is just a union of $\K$-circuits and a $K$-vector of a $K$-matroid is just a $K$-linear combination of $K$-circuits. Hence given any $\K$-vector $C$ of $M$ given as the union of $\K$-circuits $C_1,\dots,C_n$, it suffices to take a linear combination of preimages of each $C_i$ such that there is no cancellation (which is possible since $K$ is infinite). Then clearly this linear combination is mapped to $C$.

\medskip


As another example of a positive result, we can say the following about the sign case.

\begin{prop} \label{prop:sign is epic}
Suppose $K$ is a field such that $\Q \subseteq K \subseteq \R$. The natural map ${\rm sign} : K \to \S$ is epic.
\end{prop}
 
\begin{proof}
Let $M$ be an $K$-matroid on $[n] = \{ 1,\ldots, n \}$ and let ${\rm sign}_* (M)$ be the associated oriented matroid. 
By \Cref{df:pushforward}, the circuits of ${\rm sign}_* (M)$ are the push-forward of the circuits of $M$, i.e. $C' \in \cC({\rm sign}_* (M))$ iff $C' = {\rm sign}(C)$ for some $C \in \cC(M)$.

On the other hand, recall that by the vector axioms for oriented matroids 
{\cite[Definition 3.7.1]{Bjorner-LasVergnas-Sturmfels-White-Ziegler99}}, the vectors of an oriented matroid are precisely the conformal compositions of circuits (recall \Cref{ex:VecCov Examples} part (3)). 
Hence, it suffices to show that the image of the natural map $\Cov(M) \to \Cov({\rm sign}_* (M))$ is closed under conformal composition.

Suppose $X'$ and $Y'$ are two covectors in  $\Cov({\rm sign}_* (M))$ and that there exist $X$ and $Y$ in $\Cov(M)$ satisfying $X' = {\rm sign} (X)$ and $Y' = {\rm sign} (Y)$. 
Let $\epsilon$ be a positive element of $K$ that is smaller than $|\tfrac{X_i}{Y_i}|$ for all $i\in [n]$. Then $X+\epsilon Y$ is a covector of $M$ and ${\rm sign}(X+\epsilon Y)$ gives the conformal composition $X'\circ Y'$.
\end{proof}

A similar argument can be used to show:

\begin{prop} \label{prop:valuation is epic}
Let $K$ be an infinite field. The natural map ${\rm exp}({-v}) : K\{\!\{T\}\!\}  \to \T$ is epic.
\end{prop}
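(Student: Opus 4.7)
My plan is to mimic the proof of Proposition \ref{prop:sign is epic}, but replacing conformal composition with tropical linear combination as the operation that generates all vectors from circuits. It will suffice to show that the natural map $\CapVec(M) \to \CapVec(v_*(M))$ is surjective for every $K\{\!\{T\}\!\}$-matroid $M$. By the definition of push-forward, every $\T$-circuit of $v_*(M)$ is the image under $v$ of a $K\{\!\{T\}\!\}$-circuit of $M$, so all circuits lie in the image, and it remains to prove the image is closed under the appropriate composition operation.

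The first step is to establish the tropical analog of the oriented-matroid vector axiom: every element $X' \in \CapVec(v_*(M))$ can be written as a tropical linear combination $\boxplus_{j=1}^k \mu_j \odot C^j$ of $\T$-circuits $C^j$ of $v_*(M)$, computed coordinate-wise with the minimum attained exactly once at each coordinate of the support of $X'$. This is the valuated-matroid analog of ``vectors are conformal compositions of circuits'' and is essentially contained in the Dress--Wenzel theory, though some care is required to verify it inside the tract-theoretic framework used here.

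Granting this characterization, induction on $k$ reduces the problem to the following two-term closure step: given $X, Y \in \CapVec(M)$ with $v(X) = X'$ and $v(Y) = Y'$, and given $c \in \R$, produce $Z \in \CapVec(M)$ with $v(Z) = X' \boxplus (c \odot Y')$. Since $\CapVec(M)$ is a $K\{\!\{T\}\!\}$-linear subspace, I take $Z := X + a T^c Y$ for a well-chosen $a \in K^\times$. For coordinates $i$ with $X'_i \neq c + Y'_i$, the identity $v(Z_i) = \min(X'_i, c + Y'_i)$ holds automatically. For the finitely many coordinates with $X'_i = c + Y'_i$ finite, the leading term of $Z_i$ is the sum of the leading terms of $X_i$ and $aT^cY_i$, and this sum vanishes iff $a$ equals one particular element of $K^\times$ determined by the ratio of the two leading coefficients. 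Since $K$ is infinite, I can pick $a$ avoiding all of these finitely many bad values, obtaining $v(Z) = X' \boxplus (c \odot Y')$. This is the direct tropical analog of the ``choose $\epsilon > 0$ sufficiently small'' trick from the sign hyperfield case.

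The main obstacle I expect is the first step: precisely formulating and justifying the tropical vector axiom within the present tract framework, where tropical addition is multi-valued. Once that characterization is in place, the closure step is a routine genericity argument made possible by the infiniteness of $K$.
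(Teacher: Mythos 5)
Your overall strategy is the same as the paper's: reduce to showing the image of $\CapVec(M)\to\CapVec(v_*(M))$ contains the circuits and is closed under a two-term composition, and handle the two-term step by choosing $a\in K^\times$ generically so that no leading terms cancel (this is exactly the paper's argument, which picks $a$ with $a\cdot d_i\neq -c_i$ for the finitely many leading coefficients $c_i,d_i$, using that $K$ is infinite). The genuine gap is in your Step 1: the characterization you propose to prove is false as stated. It is not true that every vector of a valuated matroid is a tropical linear combination of circuits in which the minimum is attained \emph{exactly once} at each coordinate of the support. For example, take $M$ the trivially valuated $U_{1,3}$: its $\T$-circuits are the scalings of $(0,0,\infty)$, $(0,\infty,0)$, $(\infty,0,0)$, and $(0,0,0)$ is a vector (it is orthogonal to the unique cocircuit $(0,0,0)$, and it is $v(1,1,-2)$ for the obvious realization). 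But any expression of $(0,0,0)$ as a combination of scaled circuits forces a tie: if the minimizer at coordinate $1$ is unique, a short case analysis on which circuit type carries it shows that either coordinate $2$ or coordinate $3$ must have its minimum attained twice, or fail to equal $0$. So a tie-freeness requirement on the support cannot be part of the vector axiom, and since you flag Step 1 as the main thing to be proved, the proof as written does not go through. (You also leave unaddressed what happens at coordinates outside the support, where one in fact \emph{needs} either a tie or all terms infinite for the hypersum to contain the zero element.)

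The repair is exactly what the paper does: the correct statement is that vectors of a valuated matroid are precisely the coordinatewise compositions $(X\circ Y)_i=\max\{X_i,Y_i\}$ (min, in the opposite convention) of circuits, with no condition on uniqueness of the optimum; this is a known result, \cite[Lemma 22]{Bowler-Pendavingh19}, which the paper cites rather than reproves. Once Step 1 is replaced by this citation, your Step 2 needs no modification and in fact already handles ties: at a coordinate where $X'_i=c+Y'_i$ is finite, avoiding cancellation of leading terms gives $v(Z_i)$ equal to the common value, which is exactly the composition there. So the "main obstacle" you anticipate is not something to prove in this framework but something to quote, and with that substitution your argument coincides with the paper's proof.
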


\begin{proof}
As in the proof of Proposition~\ref{prop:sign is epic}, there is a binary composition operation for vectors of valuated matroids, defined by $(X\circ Y)_i = \text{max} \{X_i,Y_i\}$, such that vectors are precisely the finite compositions of circuits, \cite[Theorem 22]{Bowler-Pendavingh19} (recall \Cref{ex:VecCov Examples} part (4)). It suffices to show that the image of the natural map $\Cov(M) \to \Cov(v_* (M))$ is closed under composition.

Suppose $X'$ and $Y'$ are two covectors in  $\Cov(v_* (M))$ and let $X$, $Y$ in $\Cov(M)$ satisfying $X_i' = v (X_i)$ and $Y_i' = v (Y_i)$ for all $i\in [n]$. Let $c_i$ and $d_i$ be the coefficients of the initial terms of $X_i$ and $Y_i$. Since $K$ is an infinite field, there exists some $a \in K$ such that $a\cdot d_i \neq -c_i$ for all $i$. This implies that there is no cancellation in the lowest order term in each coordinate, hence that $v(X+a\cdot Y) = X\circ Y$.
\end{proof}

\begin{ex}\label{ex:SignTrop Epic}
The natural maps $\S\to\K$ and $\T\to\K$ are both epic. This follows from the fact that a vector of a matroid is the same thing as a union of circuits, together with the observation that the composition operations in \Cref{prop:sign is epic} and \Cref{prop:valuation is epic} satisfy $\supp (V_1\circ V_2) = \supp(V_1) \cup \supp(V_2)$. 
\end{ex}

\begin{rem}
    Observing the features of \Cref{prop:sign is epic}, \Cref{prop:valuation is epic}, and \Cref{ex:SignTrop Epic}, one might hope that all surjective maps $\varphi:F' \to F$ from infinite tracts to finite ones are epic, but this is not the case. 

    Consider the tract $F$ with the same underlying set and multiplicative structure as $\R$, but with the null set restricted to formal sums $a + (-a)$ with at most two non-zero terms. Let $M$ denote the $F$-matroid with underlying matroid $U_{2,3}$, $F$-circuit set $\cC(M) = \set{c\cdot (1,1,1): c\in \R},$ and $F$-cocircuit set $\cC^*(M) = \set{c(e_i-e_j):i< j \in \set{1,2,3},c\in \R}.$ Then, if $\varphi : F \to \K$ denotes the canonical map to $\K$, we see that $(1,1,1)$ is covector of $\varphi_*(M)$ which is not the image of  any covector of $M$ (since, for example, a covector of $M$ can have at most two non-zero terms).
    
    This example highlights the importance of the composition operations that enabled the proofs of the previous three results. In \cite[Section 4.3]{Bowler-Pendavingh19} the authors describe, in the case where $F$ is a \textbf{stringent hyperfield}, a general composition operation on the $F$-circuits of $F$-matroids such that $F$-vectors are precisely compositions. This suggests that perhaps if $F$ is a finite stringent hyperfield and $F'$ is infinite, then $\varphi$ will be epic.
\end{rem}

In general, it seems hard to say precisely when equality holds for the various inequalities we've touched upon so far in this paper. 
However, there is at least one case where things are relatively nice.

\begin{thm} \label{thm:quotienthyperfieldrank}
Let $K$ be a field, let $H$ be a subgroup of $K^\times$, and let $\varphi: K \to F$ be the canonical quotient map to the hyperfield $F = K / H$. 
Let $A$ be an $m\times n$ matrix over $F$. 
Then the following are equivalent:
\begin{enumerate}
    \item $r_{\mat}(\varphi^{-1}(A)) = n$. 
    \item $r_{\varphi\text{-}\mat}(A) = n$.
    \item $r_{\mat} (A) = n$.
    \item $r_{\col} (A) = n$.
\end{enumerate}
\end{thm}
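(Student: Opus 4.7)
The plan is to arrange the four quantities into a single chain of inequalities and then reduce the equivalence to one nontrivial implication. Combining \Cref{prop:col<mat} with \Cref{prop:basic relative inequality} yields
\[
r_{\col}(A) \;\leq\; r_{\mat}(A) \;\leq\; r_{\varphi\text{-}\mat}(A) \;\leq\; r_{\mat}(\varphi^{-1}(A)),
\]
and each of these four quantities is bounded above by $n$: for the first three, the uniform matroid $U_{n,n}$ (respectively, its tautological lift over $F'$) witnesses the bound because $\Cov(U_{n,n})=F^n$; for the fourth, $\varphi$ is surjective so a lift of $A$ always exists, and any lift is an $m\times n$ matrix over the field $K$ and hence has ordinary rank at most $\min(m,n)=n$. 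Since all four ranks lie in $[0,n]$, the implications $(3)\Rightarrow(4)\Rightarrow(2)\Rightarrow(1)$ all follow by sandwiching.

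The only nontrivial implication is $(1)\Rightarrow(3)$, which I would prove by contrapositive: if $r_{\col}(A)<n$, I will exhibit a single lift $A'$ of $A$ with $r(A')<n$, which forces $r_{\mat}(\varphi^{-1}(A))<n$. By hypothesis there exist $c_1,\dots,c_n\in F$, not all zero, with $\sum_i c_iA_i\in(N_F)^m$. Fix lifts $\tilde c_i\in K$ of each $c_i$, choosing $\tilde c_i=0$ when $c_i=0$ and any nonzero preimage when $c_i\neq 0$. For each row index $r\in[m]$, the relation $\sum_i c_iA_{ri}\in N_F$ unpacks, via the definition of the null set of the quotient hyperfield $F=K/H$, into a choice of elements $\tilde y_{ri}\in K$ lifting $c_iA_{ri}\in F$ and satisfying $\sum_i\tilde y_{ri}=0$ in $K$. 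Note that automatically $\tilde y_{ri}=0$ whenever $c_i=0$, because the only $K$-preimage of $0\in F$ is $0$.

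The key step is to package these per-row witnesses into one coherent matrix over $K$. For each $i$ with $c_i\neq 0$, set $A'_{ri}:=\tilde c_i^{-1}\tilde y_{ri}$; this lies in $K$ and lifts $A_{ri}$, since $\varphi(A'_{ri})=c_i^{-1}(c_iA_{ri})=A_{ri}$. For each $i$ with $c_i=0$, let $A'_{ri}$ be any lift of $A_{ri}$. Then $\varphi(A')=A$, and for every row $r$ one computes $\sum_i\tilde c_iA'_{ri}=\sum_i\tilde y_{ri}=0$ in $K$, where the cases $c_i\neq 0$ and $c_i=0$ each contribute the correct term. Since not all of the $\tilde c_i$ vanish, this is a nontrivial $K$-linear dependence among the columns of $A'$, so $r(A')<n$, as required. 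I expect the main obstacle to be precisely this alignment step: the null-set witnesses $\tilde y_{ri}$ are a priori chosen independently for each row, and the rescaling trick $A'_{ri}=\tilde c_i^{-1}\tilde y_{ri}$ with a single fixed lift $\tilde c_i$ is what splices them into a coherent $K$-matrix lifting $A$.
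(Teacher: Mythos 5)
Your proposal is correct and follows essentially the same route as the paper: reduce via the chain $r_{\col}(A)\leq r_{\mat}(A)\leq r_{\varphi\text{-}\mat}(A)\leq r_{\mat}(\varphi^{-1}(A))\leq n$ to the single implication that $r_{\col}(A)<n$ forces a lift of rank less than $n$, constructed by unpacking the null set of the quotient hyperfield row by row. Your rescaling $A'_{ri}=\tilde c_i^{-1}\tilde y_{ri}$ is just a reparameterization of the paper's choice $A'_{ij}=c_{ij}\widetilde{a_{ij}}$ with dependence coefficients $\widetilde{x_j}$, so the two constructions coincide.
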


\begin{proof}
In view of \Cref{prop:col<mat} and \Cref{prop:basic relative inequality} we have
\begin{align*}
    r_{\mat}(\varphi^{-1}(A)) \geq r_{\varphi\text{-}\mat}(A) \geq r_{\mat} (A) \geq r_{\col} (A).
\end{align*}
Hence it suffices to show $r_{\mat}(\varphi^{-1}(A)) = n$ implies $r_{\col} (A) = n$. We show the inequality that $r_{\col} (A) < n$ implies $r_{\mat}(\varphi^{-1}(A)) < n$.

If $r_{\col}(A)<n$, then the columns of $A$ are linearly dependent over $F$, hence there exist $x_1,\dots, x_n \in F$, not all zero, such that $\sum_{j = 1}^n a_{ij}x_j \in N_F$ for each $i \in \set{1,\dots,m}$. If we let $\widetilde{a_{ij}}$ be any lift of $a_{ij}$ and let $\widetilde{x_j}$ be any lift of $x_j$, then by definition of $F$ there exist $c_{ij}\in H$ such that $\sum_{j = 1}^nc_{ij}\widetilde{a_{ij}}\widetilde{x_j} = 0$ for all $i$. If we take the lift $A'$ of $A$ to be given by $A'_{i,j}:= c_{ij}\widetilde{a_{ij}}$ then clearly $A'$ has columns linearly dependent over $K$, so $r(A')<n$. Since $K$ is a field, \Cref{prop:FieldEquality} implies that $r_{\mat}(A') <n$, so we have exhibited a lift of $A$ which has matroidal rank strictly less than $n$, proving the theorem.
\end{proof}

\begin{rem}
Note that \Cref{thm:quotienthyperfieldrank} would not hold if we were to replace $n$ in the four equivalent statements with some $r<n$. The subtlety here is that in the case where there is a linear dependence among all of the columns of the matrix, we can lift the whole matrix simultaneously to $K$ and find the required coefficients $c_1,\dots,c_n$ corresponding to this lift. By way of contrast, if there is a linear dependence among every $r$ of the columns of $A$ for some $r<n$, we would get linear dependencies among the lifts of each subset of $r$ columns and corresponding coefficients $\set{(c_1,\dots,c_r)_D:D \in {n \choose r}}$ \textit{for each subset}, but these may not agree on the overlaps of the different lifts. 

When $m<n$ the result is vacuously true ($r_{\mat}(\phi^{-1}(A))\leq m<n$ by basic linear algebra) but not very helpful. Indeed, we already saw in Example~\ref{exS} that there exists a $3\times 4$ matrix $A$ over $\S$ with $r_{\mat}(A) = 3$ but $r_{\col} (A) = 2$.
\end{rem}

\begin{cor} \label{cor:quotienthyperfieldrank}
With notation as in Theorem~\ref{thm:quotienthyperfieldrank}, the following are equivalent for an $n\times n$ matrix $A$ over $F$:
\begin{enumerate}
    \item Every matrix $A'$ over $K$ with $\varphi(A') = A$ is nonsingular.
    \item The columns of $A$ are $F$-linearly independent.
    \item $r_{\mat} (A) = n$.
    \item $r_{\tmat} (A) = n$.
\end{enumerate}
\end{cor}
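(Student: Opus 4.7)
The plan is to deduce the corollary directly from Theorem~\ref{thm:quotienthyperfieldrank} applied to both $A$ and $A^T$. Since $A$ is $n\times n$, both $A$ and its transpose satisfy the hypothesis $m\geq n$ of that theorem, which is what makes the symmetric condition (4) tractable.

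First I would translate the corollary's conditions (1) and (2) into the language of the theorem. Condition (2) is literally $r_{\col}(A)=n$. For condition (1), I would observe that matroidal rank over a field equals ordinary rank, and an $n\times n$ matrix over $K$ has rank at most $n$; hence $r_{\mat}(\varphi^{-1}(A))=n$ is equivalent to saying that every lift $A'$ of $A$ satisfies $\mathrm{rank}(A')\geq n$, i.e., every lift is nonsingular. Thus (1) $\iff$ $r_{\mat}(\varphi^{-1}(A))=n$. Applying Theorem~\ref{thm:quotienthyperfieldrank} to $A$ then yields the equivalences (1) $\iff$ (2) $\iff$ (3).

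It then remains to show (3) $\iff$ (4), and the strategy is to apply Theorem~\ref{thm:quotienthyperfieldrank} a second time, now to $A^T$ (which is again $n\times n$, so the hypothesis is again met). This gives (4) $\iff$ $r_{\mat}(\varphi^{-1}(A^T))=n$. The lifts of $A^T$ are precisely the transposes of the lifts of $A$, and a square matrix over a field is nonsingular iff its transpose is; therefore $r_{\mat}(\varphi^{-1}(A^T))=n$ holds iff every lift of $A$ is nonsingular, which is (1). Composing all of these equivalences closes the chain.

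I do not expect a real obstacle here: the argument is essentially bookkeeping on top of Theorem~\ref{thm:quotienthyperfieldrank}. The one subtlety worth flagging is that the symmetry $r_{\mat}(A)=r_{\mat}(A^T)$ is not a general phenomenon (cf.~Remark~\ref{rem:matroidal rank of transpose}); it is forced in this setting by the square shape together with the full-rank condition, which pins both quantities to the common value $n$ via nonsingularity of the same family of lifts.
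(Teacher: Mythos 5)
Your argument is correct and is exactly the route the paper intends: the corollary is stated without separate proof precisely because it follows from Theorem~\ref{thm:quotienthyperfieldrank} applied to the square matrices $A$ and $A^T$, with (1) translated as $r_{\mat}(\varphi^{-1}(A))=n$ and lifts of $A^T$ identified with transposes of lifts of $A$. The only point worth making explicit is that lifts always exist here because the quotient map $\varphi:K\to F$ is surjective, so $r_{\mat}(\varphi^{-1}(A))$ is finite and the equivalence with ``every lift is nonsingular'' is not vacuous.
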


\begin{proof}
    This is a direct consequence of \Cref{thm:quotienthyperfieldrank}, with the only slight wrinkle being the application of third equality in \Cref{prop:FieldEquality} to get part $(4)$, namely:
    \begin{align*}
        r_{\mat}(A) = n \xleftrightarrow{\ref{cor:quotienthyperfieldrank}} r_{\mat}(\varphi^{-1}(A)) = n \xleftrightarrow{\ref{prop:FieldEquality}}  r_{\tmat}(\varphi^{-1}(A)) = n \xleftrightarrow{\ref{cor:quotienthyperfieldrank}} r_{\tmat}(A) = n.
    \end{align*}
\end{proof}

Applying Corollary~\ref{cor:quotienthyperfieldrank} to various specific examples 
recovers several known results from the literature in a unified manner. 

\begin{proof}[Proof of ~\ref{theorem:thm2}]
    For part $(1)$, consider the map $\varphi:\C \to \V$ and take $A$ to be a matrix over $\V$. By \Cref{cor:quotienthyperfieldrank}, every complex matrix $A'$ with $\varphi(A') = A$ is non-singular if and only if the columns of $A$ are $\V$-linearly independent. Hence, after left multiplying by a permutation matrix to put the largest entries on the diagonals, strict diagonal dominance is equivalent the condition that for each row of $PA$, if one takes the associated linear combination with coefficients from $D$, the resulting sum is not in $N_\V$, giving the theorem.

    For part $(2)$, we instead take $\varphi:\C \to \P$ and we now take $A$ to be a matrix over $\P$. By \Cref{cor:quotienthyperfieldrank}, every complex matrix $A'$ with $\varphi(A') = A$ is non-singular if and only the columns of $A$ are $\P$-linearly independent. Since linear dependence over $\P$ is equivalent to the failure of each row to be colopsided after some rescaling, the result follows.
\end{proof}


Applied to ${\rm sign} : \R \to \S$, \Cref{thm:quotienthyperfieldrank} recovers the following result from \cite{arav2015}:

\begin{cor} \cite[Corollary 21]{arav2015}
 Let $A$ be an $m\times n$ sign pattern and let {\rm row}$(A)$ denote the set of rows of $A$. Then $r({\rm sign}^{-1}(A)) = n$
if and only if for every non-zero sign vector $x\in \{+,-,0\}^n$, {\rm row}$(A)\nsubseteq x^\perp$.
\end{cor}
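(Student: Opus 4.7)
The plan is to deduce this corollary directly from \Cref{thm:quotienthyperfieldrank} applied to the natural quotient map $\sign : \R \to \S$, which realizes $\S$ as the quotient hyperfield $\R^\times/\R_{>0}$. Under this identification, a lift of $A$ over $\R$ is precisely a real matrix with the prescribed sign pattern, and since $\R$ is a field the matroidal rank of such a lift coincides with its ordinary rank, so the quantity $r(\sign^{-1}(A))$ appearing in the statement coincides with $r_{\mat}(\sign^{-1}(A))$.

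First I would observe that $r(\sign^{-1}(A)) = n$ forces $m \geq n$ (any $m \times n$ real matrix has rank at most $m$), so the hypothesis of \Cref{thm:quotienthyperfieldrank} is satisfied, and invoking that theorem yields the equivalence $r(\sign^{-1}(A)) = n \iff r_{\col}(A) = n$. The remaining task is to translate $r_{\col}(A) = n$ into the stated orthogonality condition. By the definition of linear independence over a tract, the columns $A_{*,1},\ldots,A_{*,n}$ of $A$ are $\S$-linearly dependent iff there exists a nonzero $x \in \S^n$ with $\sum_{j=1}^n x_j A_{*,j} \in (N_\S)^m$. Reading this componentwise, the $i$-th entry $\sum_{j=1}^n A_{ij} x_j$ must lie in $N_\S$ for every $i$, and by the definition of orthogonality this is exactly the assertion that $\mathrm{row}_i(A) \in x^\perp$, i.e., $\mathrm{row}(A) \subseteq x^\perp$.

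Combining the two steps, $r(\sign^{-1}(A)) = n$ iff no nonzero $x \in \S^n$ satisfies $\mathrm{row}(A) \subseteq x^\perp$, which is exactly the claim. I do not anticipate any real obstacle: the whole argument is a direct unwinding of definitions once \Cref{thm:quotienthyperfieldrank} is in hand, and the paper has already developed all the required machinery (the realization of $\S$ as a quotient hyperfield, the equivalence of the four rank notions for $m \geq n$, and the tract definitions of orthogonality and linear independence). In the degenerate case $m < n$, the left-hand side fails for trivial rank reasons; one checks by a short direct argument that the right-hand side also fails (a nonzero $x \in \S^n$ orthogonalizing every row can be constructed by imposing one sign condition per row on a vector supported in more than $m$ coordinates), so the equivalence holds vacuously there as well.
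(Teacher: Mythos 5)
Your argument is essentially the paper's intended derivation: the paper obtains this corollary by applying Theorem~\ref{thm:quotienthyperfieldrank} (via Corollary~\ref{cor:quotienthyperfieldrank}) to the quotient map $\sign:\R\to\S$ with $H=\R_{>0}$, and your translation of $r_{\col}(A)=n$ into the condition ``no nonzero $x\in\S^n$ has ${\rm row}(A)\subseteq x^\perp$'' is exactly the definitional unwinding of $\S$-linear independence of the columns that this requires. The only soft spot is your treatment of the case $m<n$: the parenthetical construction (``imposing one sign condition per row on a vector supported in more than $m$ coordinates'') is not a valid argument as stated, since orthogonality to a row over $\S$ is not a single sign condition. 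The clean fix is the quotient-hyperfield lifting argument the paper itself sketches near Question~\ref{question:system of linear equations}: take any real lift $A'$ of $A$, choose a nonzero $v\in\ker(A')$ (which exists because $m<n$), and set $x=\sign(v)$; then for each row $i$ the real identity $\sum_j a'_{ij}v_j=0$ witnesses $\sum_j a_{ij}x_j\in N_\S$, so ${\rm row}(A)\subseteq x^\perp$ and the right-hand side fails, matching the failure of the left-hand side. With that one-line repair your proof is complete and coincides in substance with the paper's.
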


\begin{proof}
    By the equivalence of $(1)$ and $(2)$ from \Cref{thm:quotienthyperfieldrank}, we have $r(\sign^{-1}(A)) = n$ if and only if the columns of $A$ are $\S$-linearly independent. Thus there is no non-zero vector with entries in $\S$ which is orthogonal to every row of $A$, as this would exhibit an $\S$-linear dependence among the columns.
\end{proof}

Here is an application of \Cref{thm:epic} inspired by the proof of \cite[Proposition 2.5]{Berman2008AnUB} given in {\cite[unlabeled Theorem following Corollary 29]{Deaett}}.

\begin{thm}  \label{thm:zeroPatternApplication}
Let $\chi$ be an $m\times n$ matrix over $\K$, i.e., a zero-non-zero pattern. Let $K$ be an infinite field, and let $\varphi: K \to \K$ be the natural map.
If $\chi$ has at least $t$ non-zero entries in each row, then $r(\varphi^{-1}(\chi)) \leq n-t+1$, i.e., there exists a matrix $A$ over $K$ with  $\varphi(A) = \chi$ and $r(A) \leq n-t+1$. 
\end{thm}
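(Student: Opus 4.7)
The plan is to reduce the problem to computing the relative $\varphi$-matroidal rank of $\chi$ and then exhibit a single explicit $K$-matroid witness whose push-forward has every row of $\chi$ as a covector. Concretely, since $K$ is infinite, the lemma preceding the theorem tells us that $\varphi : K \to \K$ is epic, so \Cref{thm:epic} applies and yields $r_{\mat}(\varphi^{-1}(\chi)) = r_{\varphi\text{-}\mat}(\chi)$. Combined with the equality $r_{\mat}(A) = \mathrm{rank}(A)$ from \Cref{prop:FieldEquality} for matrices over a field, it suffices to produce a $K$-matroid $M'$ of rank at most $n - t + 1$ such that every row of $\chi$ is a covector of $\varphi_*(M')$.

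The candidate for $M'$ will be the uniform matroid $U_{n-t+1,n}$, viewed as a $K$-matroid via any representation over $K$. Such a representation exists because $K$ is infinite (any $(n-t+1) \times n$ matrix over $K$ whose $(n-t+1) \times (n-t+1)$ minors are all nonzero works, and Vandermonde-type constructions provide one over any infinite field). The rank is then $n - t + 1$, as required.

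Next I would verify that every row of $\chi$ is a covector of the $\K$-matroid $\varphi_*(M') = U_{n-t+1,n}$. By the identification in \Cref{rem:matroidalrankequalsDeaettrank}, this is equivalent to saying that, for each row $R$ of $\chi$, its zero set is a flat of $U_{n-t+1,n}$. The flats of $U_{n-t+1,n}$ are precisely the subsets of $[n]$ of cardinality at most $n - t$, together with $[n]$ itself. Since the hypothesis says that each row has at least $t$ nonzero entries, its zero set has cardinality at most $n - t$, and therefore is indeed a flat. Hence $r_{\varphi\text{-}\mat}(\chi) \leq n - t + 1$.

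Putting these pieces together, \Cref{thm:epic} upgrades this bound to $r_{\mat}(\varphi^{-1}(\chi)) \leq n - t + 1$, which by definition means that some lift $A$ of $\chi$ satisfies $r_{\mat}(A) \leq n - t + 1$; invoking \Cref{prop:FieldEquality} one last time gives $\mathrm{rank}(A) \leq n - t + 1$, as claimed. The main subtle point, and the one step that really uses something nontrivial, is the availability of \Cref{thm:epic} — which in turn rests on the epicness of $\varphi : K \to \K$ for infinite $K$. Everything else reduces to the purely combinatorial observation that $U_{n-t+1,n}$ has exactly the right flat structure to swallow all the zero sets simultaneously.
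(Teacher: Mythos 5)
Your proposal is correct and follows essentially the same route as the paper: invoke epicness of $\varphi : K \to \K$ for infinite $K$ together with \Cref{thm:epic}, take $U_{n-t+1,n}$ (representable over $K$ via a Vandermonde matrix) as the witness, and check that each row is a covector. The only cosmetic difference is that you verify the covector condition through the flat characterization of \Cref{rem:matroidalrankequalsDeaettrank}, whereas the paper checks orthogonality to the circuits directly; both are one-line verifications of the same fact.
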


\begin{proof}
Any $v\in \K^n$ with at least $t$ non-zero elements is a covector of $U_{n-t+1,n}$, since the circuits are all subsets of size $n-t+2$. 
Furthermore, the matroid $U_{n-t+1,n}$ is representable over any infinite field \cite[Corollary 12.2.17]{Oxley92} (for example, by an $(n-t+1)\times n$ Vandermonde matrix); let $A$ denote such a matrix. Then the row space of $A$ gives a $K$-matroid $M$ satisfying $\varphi_*(M) = U_{n-t+1,n}$. To conclude, we note that by \Cref{thm:epic},
\begin{align*}
    r_{\mat}(\varphi^{-1}(\chi)) = r_{\varphi\text{-}{\rm mat}}(\chi) \leq r(M) = r(A) = n-t+1.
\end{align*}
\end{proof}
\begin{rem}
    Note that this result does not imply that every $r(A)\leq n - t+1$ for every matrix $A$ such that $\varphi(A) = \chi$. For example, take $n = 2$ and $t = 2$ and consider the $K$-matrices
    \begin{align*}
        A_1 = \begin{pmatrix}
            1 & 1\\
            1 & 1
        \end{pmatrix},
        \quad
        A_2 = \begin{pmatrix}
            1 & 2\\
            1& 1
        \end{pmatrix}.
    \end{align*}
    Both $A_1$ and $A_2$ map to the same $\K$-matrix and have $t$ non-zero entries in each row, yet $r(A_1)  =1\leq n-t+1$ while $r(A_2) = 2> n-t+1$. This highlights the importance in the proof of constructing of a $K$-matrix which represents $U_{n-t+1,n}$ (as $A_1$ does), which is stronger than just being in the preimage of $\chi$ (which both $A_1$ and $A_2$ are).
\end{rem}

Proposition~\ref{prop:sign is epic}, together with the idea behind the proof of Theorem~\ref{thm:zeroPatternApplication}, allows us to obtain a new proof of \cite[Theorem 3.3]{arav2015}, a similar result to \Cref{thm:zeroPatternApplication} involving sign patterns. Before stating the theorem, we will need the following definitions.

\begin{df}\cite[p.~903]{LYMFW13}
Given $V \in \{0,1,-1\}^n$, let $V^+ = \{ i \; | \; V_i = 1 \}$ and $V^- = \{ i \; | \; V_i = -1 \}$.
Define the {\bf number of polynomial sign changes} $\psc(V)$ to be the maximal number of sign changes of a sign vector $X\in \{1,-1\}^n$ such that $V^+\subseteq X^+$ and $V^-\subseteq X^-$. (In other words, we allow a zero entry of $V$ to count as either $1$ or $-1$ and then count the maximum possible number of sign changes in such a vector.)
\end{df}

We will also make use of the alternating oriented matroid $C^{n,r}$ of rank $r$ on $[n]$. A more classical definition can be found in \cite[Section 9.4]{Bjorner-LasVergnas-Sturmfels-White-Ziegler99}, but for the reader's convenience we furnish a definition adapted to our notation:

\begin{df}\label{def:alt ori mat}
    The \textbf{alternating oriented matroid} $C^{n,r}$ is the $\S$-matroid with underlying matroid $U_{r,n}$ for which $\cC(C^{n,r})$ consists of all vectors in $\S^n$ with exactly $r+1$ non-zero entries such that the non-zero entries alternate in sign.
\end{df}

\begin{lemma} \label{lemma:CovectorOfCnr}
$V\in \S^n$ is a covector of $C^{n,r}$ if $\sigma(V) < r$. 
\end{lemma}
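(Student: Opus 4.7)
My plan is to exploit the Vandermonde realization of $C^{n,r}$. For any real $x_1<\cdots<x_n$, the $r\times n$ matrix $A$ with $A_{ij}=x_j^{i-1}$ defines an $\R$-matroid $M$, and a straightforward check of Plücker signs shows that ${\rm sign}_*(M)=C^{n,r}$. Since a tract homomorphism maps covectors to covectors, it follows that every sign vector of a polynomial evaluation $(p(x_1),\ldots,p(x_n))$ with $p\in\R[x]$ of degree at most $r-1$ is a covector of $C^{n,r}$. Thus, for each $V\in\S^n$ with $\sigma(V)\le r-1$, it suffices to construct such a polynomial $p$ satisfying ${\rm sign}(p(x_i))=V_i$ for every $i$.

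The natural construction is to first build in the required zeros: let $Z=\{i:V_i=0\}$, set $q(x)=\prod_{i\in Z}(x-x_i)$, and seek $p$ in the form $p=q\cdot s$ with $\deg(s)\le r-1-|Z|$. On the support $S=\{i_1<\cdots<i_k\}$ of $V$, the sign condition then becomes ${\rm sign}(s(x_{i_\ell}))=U^{(\ell)}$, where $U^{(\ell)}:=V_{i_\ell}\cdot{\rm sign}(q(x_{i_\ell}))\in\{+,-\}$. Given the sign sequence $U$, a standard interpolation $s(x)=\pm\prod_j(x-y_j)$ with each $y_j$ placed strictly between $x_{i_\ell}$ and $x_{i_{\ell+1}}$ for a sign-change index $\ell$ of $U$ produces a polynomial $s$ whose degree equals the number of sign changes of $U$ and whose signs at the $x_{i_\ell}$ match $U$.

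The combinatorial heart of the argument, and what I expect to be the main obstacle, is the identity
\[
\#\{\ell\in\{1,\ldots,k-1\}:U^{(\ell+1)}\neq U^{(\ell)}\}=\sigma(V)-|Z|.
\]
The key observation is that ${\rm sign}(q(x_{i_\ell}))=(-1)^{|\{j\in Z:j>i_\ell\}|}$, which implies $U$ changes sign between indices $\ell$ and $\ell+1$ precisely when $\delta_\ell+g_\ell$ is odd, where $\delta_\ell\in\{0,1\}$ indicates whether $V_{i_\ell}\neq V_{i_{\ell+1}}$ and $g_\ell=i_{\ell+1}-i_\ell-1$ counts the zeros of $V$ strictly between these positions. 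Independently, maximizing sign changes run-by-run over fill-ins of $V$ yields $\sigma(V)=|Z|+\#\{\ell:\delta_\ell+g_\ell\text{ is odd}\}$, matching the previous count. Granted the identity, the hypothesis $\sigma(V)\le r-1$ forces $U$ to have at most $r-1-|Z|$ sign changes, so the interpolated $s$ has degree at most $r-1-|Z|$ and $p=q\cdot s$ has degree at most $r-1$, realizing $V$ as a covector of $C^{n,r}$.
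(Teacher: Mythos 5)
Your proof is correct, but it takes a genuinely different route from the paper. The paper argues purely combinatorially, directly from the definition of $\Cov$: assuming $V$ is not orthogonal to some circuit $C$ of $C^{n,r}$ (these are the alternating sign vectors with support of size $r+1$), one may normalize so that $V_iC_i\in\{0,1\}$ and then fill in the zero entries of $V$ (by $C_i$ on $\supp(C)$, by $1$ elsewhere) to obtain an admissible completion $V'$ that alternates along $\supp(C)$, forcing $\sigma(V)\geq\sigma_{\rm changes}(V')\geq r$, a contradiction; this needs no realizability and no auxiliary facts beyond the circuit description of $C^{n,r}$. You instead realize $C^{n,r}$ as ${\rm sign}_*$ of the Vandermonde (moment-curve) $\R$-matroid, use the (true, and implicitly used in the paper's proof of Proposition~\ref{prop:basic relative inequality}) fact that a tract homomorphism carries covectors of $M$ to covectors of $\varphi_*(M)$, and then explicitly construct a polynomial $p=q\cdot s$ of degree at most $\sigma(V)\leq r-1$ with ${\rm sign}(p(x_i))=V_i$; your parity bookkeeping checks out (the sign changes of $U$ are exactly the indices with $\delta_\ell+g_\ell$ odd, and the run-by-run maximization indeed gives $\sigma(V)=|Z|+\#\{\ell:\delta_\ell+g_\ell\text{ odd}\}$, with the leading and trailing zeros absorbed into $|Z|$), modulo the trivial edge case $V=0$, which is a covector of anything. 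What your approach buys is constructivity and more: it produces an actual real vector of degree-$(r-1)$ polynomial evaluations lifting $V$, so it essentially yields Theorem~\ref{thm:signPatternApplication} directly, without routing through Proposition~\ref{prop:sign is epic}; what it costs is heavier input, namely the identification of the oriented matroid of the Vandermonde configuration with $C^{n,r}$ (the positivity of all increasing maximal minors, i.e., \cite[Proposition 9.4.1]{Bjorner-LasVergnas-Sturmfels-White-Ziegler99}) and the functoriality of covectors under push-forward, whereas the paper's argument is a few lines of sign counting.
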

\begin{proof}
Suppose for the sake of contradiction that $\psc(V)<r$ and $V$ is not an $\S$-covector of $C^{n,r}$. Then $V$ is not orthogonal to some $\S$-circuit $C \in \cC(C^{n,r})$. This implies that $\sum_{i = 1}^n V_i\cdot C_i \not \in N_\S$, so each $V_i\cdot C_i$ has the same sign (or is zero). Since the circuits of an $\S$-matroid are closed under multiplication by $\S^\times$, we may assume that without loss of generality that $V_i\cdot C_i \in \{ 0,1 \}$ for all $i\in [n]$. Define
$$
V'_i=
\begin{cases}
V_i, \text{ if }V_i \neq 0\\
C_i, \text{ if }V_i = 0 \text{ and }C_i \neq 0 \\
1, \text{ if }V_i = C_i = 0.
\end{cases}
$$
Then $V'$ is a vector in $\{X\in \{1,-1\}^n \mid V^+\subseteq X^+, V^-\subseteq X^-\}$, so $\psc(V')\leq \psc(V)<r$. On the other hand, $V'_i\cdot C_i = 1$ for all $i\in {\rm supp}(C)$. Hence, the restriction of $V'$ to the support of $C$ has $r$ sign changes, which yields $\psc(V')\geq r$, a contradiction.
\end{proof}


\begin{thm}\cite[Theorem 3.3]{LYMFW13} \label{thm:signPatternApplication}
Let $\chi$ be an $m\times n$ matrix over $\S$, i.e., a sign pattern. Let ${\rm sign}: \Q \to \S$ be the natural map.
If the number of polynomial sign changes for each row $\chi$ is less than $k$, then $r({\rm sign}^{-1}(\chi)) \leq k$, i.e., there exists matrix $A$ over $\Q$ with sign pattern $\chi$ such that $r(A) \leq k$.
\end{thm}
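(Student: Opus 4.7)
The plan is to follow the same architecture as the proof of Theorem~\ref{thm:zeroPatternApplication}, but with the uniform matroid $U_{n-t+1,n}$ replaced by the alternating oriented matroid $C^{n,k}$ of rank $k$. The two facts about $\chi$ and $C^{n,k}$ that do all the work are: (i) every row of $\chi$ has $\sigma(\cdot) < k$, so by \Cref{lemma:CovectorOfCnr} every row of $\chi$ is a covector of $C^{n,k}$; and (ii) $C^{n,k}$ is representable over $\R$, via an ordinary Vandermonde matrix $V \in \R^{k \times n}$ with strictly increasing real $x_1 < \cdots < x_n$ (this is the standard realization used in \cite[Section 9.4]{Bjorner-LasVergnas-Sturmfels-White-Ziegler99}).

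Given this, I take $M'$ to be the $\R$-matroid represented by such a Vandermonde matrix $V$, so that $M'$ has rank $k$ and $\mathrm{sign}_*(M') = C^{n,k}$. By (i), every row of $\chi$ lies in $\Cov(\mathrm{sign}_*(M'))$, so directly from the definition of the relative matroidal rank we get
\[
r_{\mathrm{sign}\text{-}\mat}(\chi) \;\leq\; \mathrm{rank}(M') \;=\; k.
\]

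Next I invoke \Cref{prop:sign is epic}, which says that $\mathrm{sign} : \R \to \S$ is epic, and since $\R$ is a field \Cref{thm:epic} yields
\[
r_{\mat}\bigl(\mathrm{sign}^{-1}(\chi)\bigr) \;=\; r_{\mathrm{sign}\text{-}\mat}(\chi) \;\leq\; k.
\]
Unpacking the definition of the left-hand side, there exists a lift $A$ of $\chi$ to an $m \times n$ real matrix with $r_{\mat}(A) \leq k$; and by \Cref{prop:FieldEquality} this matroidal rank agrees with the ordinary rank over $\R$, so $\mathrm{rank}(A) \leq k$, as required.

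The only non-bookkeeping step is verifying that $C^{n,k}$ is realized by a Vandermonde matrix over $\R$, which is classical. Once that is in hand, the proof is a three-line assembly of Lemma~\ref{lemma:CovectorOfCnr}, Proposition~\ref{prop:sign is epic}, and Theorem~\ref{thm:epic}. The conceptual point is that Theorem~\ref{thm:epic} converts the combinatorial covector condition on $\chi$ (which is guaranteed by the sign-change hypothesis via $C^{n,k}$) into the existence of an actual real lift of the desired rank.
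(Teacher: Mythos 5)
Your proposal is correct and follows essentially the same route as the paper: Lemma~\ref{lemma:CovectorOfCnr} puts every row of $\chi$ in $\Cov(C^{n,k})$, realizability of $C^{n,k}$ over $\R$ (the paper cites \cite[Proposition 9.4.1]{Bjorner-LasVergnas-Sturmfels-White-Ziegler99}, you spell out the Vandermonde/moment-curve realization) bounds $r_{\mathrm{sign}\text{-}\mat}(\chi)$ by $k$, and Proposition~\ref{prop:sign is epic} together with Theorem~\ref{thm:epic} converts this into the existence of a real lift of rank at most $k$. Your explicit use of \Cref{prop:FieldEquality} at the end only makes precise a step the paper leaves implicit.
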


\begin{proof}


The $\S$-matroid $C^{n,k}$ is realizable over $\Q$ by \cite[Proposition 9.4.1]{Bjorner-LasVergnas-Sturmfels-White-Ziegler99}, so in particular there is a $\Q$-matroid $M$ such that $\sign_*(M) = C^{n,k}$. By Lemma~\ref{lemma:CovectorOfCnr}, every row of $\chi$ is a covector of $C^{n,k}$. By \Cref{thm:epic} and \Cref{prop:sign is epic}, we have
 $r({\rm sign}^{-1}(\chi)) = r_{{\rm sign}\text{-}\mat}(A) \leq r(M) = r(C^{n,k})=k$. Hence by \Cref{df:minrank} there must exist a lift $A$ of $\chi$ with $r(A)\leq k$.
\end{proof}

\begin{rem}
    Philosophically, one can summarize our new proofs of the ``classical'' results in this section as arising from viewing those results as ``shadows'' of general statements about homomorphisms between tracts. This perspective is essentially the same as the approach of the paper 
\cite{BL2020} by the first author and Lorscheid. In that paper, Baker and Lorscheid define a notion of {\bf multiplicity} for roots of a polynomial over a hyperfield $F$, and 
given a homomorphism $\varphi: F' \to F$ of hyperfields, they prove a general inequality relating the multiplicities of roots of a polynomial $p \in F'[x]$ and its image $\varphi(p)$ in $F[x]$ (along with some sufficient conditions for equality to hold). They also show how both Descartes' Rule of Signs and Newton's Polygon Rule are special cases of this general inequality. 
\end{rem}

\section{Some open questions}


\subsection{Determinantal rank}

\begin{df} \label{def: det rank}
The {\bf determinant} of an $n\times n$ matrix $B=\{b_{i,j}\}$ over a tract $F$ is the following formal sum, thought of as an element of $\N[F^\times]$:
\[
\sum_{\sigma\in S_n} (-1)^{{\rm sgn}(\sigma)} b_{1,\sigma(1)}\cdots b_{n,\sigma(n)}.
\]

The {\bf determinantal rank} of $A$ (possibly non-square), denoted $r_{\det} (A)$, is the maximal $r$ such that $A$ has a $r \times r$ submatrix $B$ with $\det(B)\notin N_F$.
\end{df}

One has the following inequality, which follows from a non-trivial theorem of Dress and Wenzel. 

\begin{thm} [{\cite[Theorem 4.9]{Dress-Wenzel92}}]\label{thm:perfect inequality}
If the tract $F$ is perfect\footnote{This means that for every $F$-matroid $M$, every $F$-vector of $M$ is orthogonal to every $F$-covector of $M$. Examples of perfect tracts include fields and the hyperfields $\K,\S,\T$, see \cite{Baker-Bowler19}.}, then $r_{\mat}(A)\geq r_{\det}(A).$ 
\end{thm}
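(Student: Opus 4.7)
The plan is to argue by contradiction. Suppose $s := r_{\mat}(A) < r := r_{\det}(A)$, and fix an $r \times r$ submatrix $A'$ of $A$ on column set $I \subseteq [n]$ (with $|I| = r$) and row set $J$ witnessing the determinantal rank, so $\det(A') \notin N_F$. Let $M$ be an $F$-matroid on $[n]$ of rank $s$ such that every row of $A$ is an $F$-covector of $M$; such an $M$ exists by definition of $r_{\mat}(A)$.

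First I would extract a linear dependence of the columns of $A'$ using the matroid structure of $M$. Since $|I| = r > s = \text{rank}(M) \geq \text{rank}(M|_I)$, the set $I$ is dependent in $\underline{M}$ and hence contains an underlying circuit; by axiom (3) in the definition of an $F$-matroid there is an $F$-circuit $C \in \cC(M)$ with $\supp(C) \subseteq I$. For each $j \in J$, the row $A_{j,*}$ is a covector of $M$ and is therefore orthogonal to $C$ by the very definition of covector, which upon restricting the sum to $\supp(C) \subseteq I$ reads
\[
\sum_{i \in I} A'_{j,i}\, C_i \;\in\; N_F.
\]
In matrix form this says $A' \cdot (C_i)_{i \in I} \in N_F^r$, and since $C$ has nonempty support the vector $(C_i)_{i\in I}$ is nonzero. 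Thus the columns of $A'$ are $F$-linearly dependent.

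The remaining and decisive step is to deduce from this columnwise dependence that $\det(A') \in N_F$, which contradicts the choice of $A'$ and completes the proof. This Cramer-type vanishing is the main obstacle in the argument, and it is precisely here that the perfectness hypothesis enters. One transfers $A'$ together with the relation $A' \cdot (C_i)_{i\in I} \in N_F^r$ to the fuzzy-ring formalism of Dress and Wenzel and invokes \cite[Theorem 4.9]{Dress-Wenzel92}; the hypothesis in that theorem that $F$-vectors and $F$-covectors are strongly orthogonal is exactly the perfectness of $F$. Modulo this routine but careful translation between the tract and fuzzy-ring frameworks, Dress-Wenzel yields $\det(A') \in N_F$, contradicting our choice of $A'$ and establishing $r_{\mat}(A)\geq r_{\det}(A)$.
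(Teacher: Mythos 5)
Your opening step is correct, but note that it is exactly the argument already recorded in Proposition~\ref{prop:col<mat}: extracting an $F$-circuit $C$ supported in $I$ and pairing it with the rows of $A$ shows that the columns of the submatrix $A'$ are $F$-linearly dependent, nothing more. All of the content of the theorem therefore sits in what you call the ``decisive step,'' and that step is not established. The statement you need --- over a perfect tract, an $r\times r$ matrix with $F$-linearly dependent columns has determinant in $N_F$, i.e.\ $\det(A')\notin N_F$ forces its columns to be independent --- amounts to the square case of the inequality $r_{\det}(A)\leq r_{\col}(A)$, and the relationship between $r_{\det}$ and $r_{\col}$ over a general perfect tract is posed in this paper as an \emph{open question}. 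The instances that are known, for $\T$ and $\S$, are precisely the nontrivial theorems of Izhakian \cite{izhakian2008} and Brualdi--Shader \cite{brualdi_shader_1995} quoted in Theorem~\ref{thm:full rank square matrix}; they are not routine consequences of anything. Dismissing this as a ``routine but careful translation'' into \cite{Dress-Wenzel92} is where your proof breaks: you offer no argument, and by this point your reduction has discarded the matroid $M$ entirely (only one circuit-orthogonality relation survives), so there is no structure left on which perfectness --- a statement about orthogonality of \emph{vectors and covectors of $F$-matroids} --- can act. Observe also that if your lemma were available, the theorem would follow immediately from Proposition~\ref{prop:col<mat} with no contradiction setup at all, which is a sign that the entire difficulty has simply been displaced into the unproved claim.

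For comparison, the paper does not reprove the inequality: it quotes \cite[Theorem 4.9]{Dress-Wenzel92} as a nontrivial theorem from which $r_{\mat}(A)\geq r_{\det}(A)$ follows, and whatever that theorem supplies, it is invoked with the full configuration in hand --- the rows of $A$ as covectors of a rank-$s$ matroid over a perfect fuzzy ring --- rather than with a single linear dependence among the columns of an otherwise arbitrary square matrix. To make your argument sound you would either have to apply the Dress--Wenzel result directly to the covector matrix (which makes your circuit detour unnecessary), or genuinely prove the determinant-versus-column-dependence lemma for perfect tracts, which is exactly the question the final section of the paper leaves open.
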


\begin{proof}
    The only translation required is to note that the perfect fuzzy rings studied by Dress and Wenzel are a subcategory of perfect tracts, and {\em a fortiori} matroids with coefficients in a perfect fuzzy ring in the sense of Dress--Wenzel are precisely $F$-matroids in our sense, where $F$ is the corresponding perfect tract \cite[Subsection 2.7]{Baker-Bowler19}. The set $K_0$ in the terminology of Dress--Wenzel corresponds to $N_F$, and their condition on the rows of $A$ is equivalent to requiring that the rows of $A$ be covectors of the given matroid. Hence their result says that any square submatrix of size larger than the rank of the matroid has null determinant, which is what we've claimed.
\end{proof}

\begin{rem} \label{rem:Deaett2}
The inequality $r_{\mat}(A)\geq r_{\det}(A)$ over a perfect tract $F$ can be strict. Indeed, the determinantal rank is clearly invariant under taking transposes, so any matrix with $r_{\mat}(A) \neq r_{\mat}(A^T)$ is an example of the strictness of the inequality, e.g., the matrix $\cX$ from Remark~\ref{rem:Deaett}. 
\end{rem}

\begin{rem}
    To highlight the importance of the ``perfect'' hypothesis in \Cref{thm:perfect inequality}, consider the tract $F$ which has the same underlying set as $\K$, but for which the null set consists only of sums with at most three non-zero terms. Then consider the $3\times 3$ matrix 
    \begin{align*}
        A = \begin{pmatrix}
        1 & 1 & 1\\
        1 & 1 & 1\\
        1 & 1 & 1
    \end{pmatrix}.
    \end{align*}
    Note that we can construct an $F$-matroid $M$ which has a single $F$-circuit $\set{(1,1,1)}$ and $\cC^*(M) = \set{(1,1,0),(1,0,1),(0,1,1)}$, so $\underline{M} = U_{2,3}$. Clearly every row of $A$ is an $F$-covector of $M$, hence $r_{\mat}(A) \leq 2$. On the other hand, if we compute the determinant of $A$ we will get a 6-term sum, and no such sums are null, hence $\det(A) \not \in N_F$. Thus $r_{\det}(A) = 3$.
\end{rem}

\begin{question}
What can one say about the relationship between $r_{\det}(A)$ and $r_{\col}(A)$ over a perfect tract $F$?
\end{question}

In this direction, there are a couple of known results for square matrices when $F=\S, \T$, or $\K$:
\begin{thm}\label{thm:full rank square matrix}
Let $A$ be an $n\times n$ square matrix over $F = \T$ \cite[Theorem 3.6]{izhakian2008}, $\S$\cite[Theorem 1.2.5]{brualdi_shader_1995}, or $\K$. Then $r_{\det}(A) = n$ if and only if  $r_{\col}(A) = n$. 
\end{thm}

\begin{proof}
    In the case where $F = \K$, the conditions that $r_{\col}(A) = n$ and $r_{\det}(A) = n$ are both clearly equivalent to $A$ having exactly one non-zero entry in each row or column.
\end{proof}

Using the $\T$ case of Theorem~\ref{thm:full rank square matrix} as a building block, Izhakian and Rowen prove the following more general result (their submatrix rank is the same as our determinantal rank):
\begin{thm}\cite[Theorem 3.4]{izhakian2008} \label{thm:izhakian-rowen}
Let $A$ be an $m\times n$ matrix over $\T$. Then $r_{\det}(A)=r_{\col}(A)=r_{{\rm row}}(A)$.
\end{thm}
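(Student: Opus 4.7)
The plan is to reduce the full theorem to the single equality $r_{\det}(A) = r_{\col}(A)$: applying this to $A^T$, and combining $r_{\col}(A^T) = r_{\row}(A)$ with the transpose-invariance $r_{\det}(A) = r_{\det}(A^T)$ of the determinant, immediately yields $r_{\row}(A) = r_{\det}(A)$. So it suffices to prove the two inequalities $r_{\det}(A) \leq r_{\col}(A)$ and $r_{\col}(A) \leq r_{\det}(A)$.

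For the easy direction $r_{\det}(A) \leq r_{\col}(A)$, set $r = r_{\det}(A)$ and pick an $r \times r$ submatrix $A'$ with $\det(A') \notin N_\T$. By \Cref{thm:full rank square matrix}(1), the $r$ columns of $A'$ are $\T$-linearly independent in $\T^r$. Any putative nontrivial dependence $\sum_j c_j v_j \in N_\T^m$ among the corresponding $r$ columns $v_1,\ldots,v_r$ of $A$ would, upon restriction to the rows indexing $A'$, yield a nontrivial dependence among the columns of $A'$, contradicting the square case. Hence $r_{\col}(A) \geq r$.

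The substantive direction is $r_{\col}(A) \leq r_{\det}(A)$. Set $s = r_{\col}(A)$ and let $B$ be the $m \times s$ submatrix formed by $s$ $\T$-linearly independent columns of $A$; the goal is to exhibit $s$ rows of $B$ whose restriction $B_I$ satisfies $\det(B_I) \notin N_\T$. Argue by contradiction: if every size-$s$ row subset $I \subseteq [m]$ gives $\det(B_I) \in N_\T$, then \Cref{thm:full rank square matrix}(1) supplies, for each such $I$, a nonzero $\alpha^I \in \T^s$ with $\sum_j \alpha^I_j (B_I)_{\cdot j} \in N_\T^s$. The crux is to glue these local witnesses into a single nonzero $\alpha \in \T^s$ with $\sum_j \alpha_j B_{\cdot j} \in N_\T^m$, which would contradict the assumed independence of the columns of $B$.

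This gluing step is the main obstacle and carries the depth of the theorem. Geometrically, for each $i \in [m]$ the set $H_i := \{\alpha \in \T^s : \sum_j \alpha_j B_{ij} \in N_\T\}$ is a tropical hyperplane, and in particular a tropically convex subset of $\T^s$; the hypothesis asserts that every size-$s$ subfamily $\{H_i\}_{i\in I}$ has a nonzero common point, while the conclusion we need is that $\bigcap_{i=1}^m H_i$ has one. This places the problem squarely in the scope of a tropical Helly-type theorem for tropical projective $(s{-}1)$-space, whose Helly number is $s$. Alternatively, and more in keeping with the original proof by Izhakian and Rowen, one can perform a direct combinatorial induction on $m$: delete one row at a time and use a genericity/compactness argument on tropical permanents to align the local witnesses $\alpha^I$ into a single $\alpha$. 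Either approach yields the desired global dependence, contradicting the independence of the columns of $B$ and completing the proof.
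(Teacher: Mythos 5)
Your reduction and the easy direction are fine: the transpose trick together with $r_{\det}(A)=r_{\det}(A^T)$ correctly reduces everything to $r_{\det}(A)=r_{\col}(A)$, and the argument that a nonsingular $r\times r$ minor forces $r$ independent columns (via \Cref{thm:full rank square matrix}(1) applied to the minor, then restricting any putative dependence to those rows) is correct. Note, however, that the paper does not prove this theorem at all -- it is quoted from Izhakian--Rowen, with \Cref{thm:full rank square matrix}(1) described as their building block -- so the real question is whether your sketch of the hard direction $r_{\col}(A)\leq r_{\det}(A)$ actually constitutes a proof.

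It does not: the ``gluing'' step, which you yourself flag as the crux, is exactly the entire content of the theorem beyond the square case, and you leave it unproved. You need: if every $s\times s$ submatrix $B_I$ of the $m\times s$ matrix $B$ is tropically singular, then the $s$ columns of $B$ admit a single global dependence $\alpha$ with $\sum_j \alpha_j B_{\cdot j}\in N_{\T}^m$. Your first suggested route, a ``tropical Helly-type theorem'' with Helly number $s$ for the tropical hyperplanes $H_i\subseteq \T^s$, is not an available off-the-shelf statement in the form you need (one must handle zero $=-\infty$ entries of $B$, the requirement that the common point be nonzero, i.e.\ projective, and the precise Helly number for this class of sets); in fact, the Helly statement for families of tropical hyperplanes in this setting is essentially a restatement of the implication you are trying to prove, so invoking it without a precise reference or proof is circular. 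Your second route, ``a direct combinatorial induction on $m$ \ldots\ genericity/compactness argument on tropical permanents to align the local witnesses $\alpha^I$,'' is a promise rather than an argument: the local witnesses for different $I$ genuinely need not be projectively compatible, and showing how to modify or combine them is precisely the substantive work carried out by Izhakian and Rowen. As written, the proposal proves only $r_{\det}(A)\leq r_{\col}(A)$ (and its row analogue) and identifies, but does not close, the gap in the other inequality.
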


\begin{rem}
In conjunction with \cite[Theorem 5.3.4]{MaclaganSturmfels} (mentioned in \Cref{rem: ranks for tropical hyperfields}), \Cref{prop:basic relative inequality}, \Cref{thm:epic}, and \Cref{thm:perfect inequality} give a chain of inequalities for matrices over $\T$:
\begin{align*}
    r_{\mat}(\varphi^{-1}(A)) = r_{\varphi\text{-}{\rm mat}}(A) \geq r_{\mat}(A) \geq r_{\det}(A) = r_{\col}(A) = r_{{\rm row}}(A).
\end{align*}

The two displayed inequalities can both be sharp, as can be seen from \Cref{ex:Fano T matroid example} in the first case and from Remark~\ref{rem:Deaett2} in the second. 
\end{rem}

\begin{rem}
The analogue of \Cref{thm:izhakian-rowen} does {\bf not} hold over $\S$, as one sees from \Cref{rem:row rank versus column rank}. This highlights how the property of having determinantal rank equal to column rank is rather special to $\T$.
\end{rem}

\subsection{Rank of $A$ versus rank of $A^T$}

We have seen in \Cref{rem:row rank versus column rank}, in which $F = \S$, that the column rank of a matrix $A$ over a tract $F$ is not always equal to the row rank. Of course, the two ranks are equal when $F$ is a field. 

\begin{question}
Can we characterize the tracts for which $r_{\col}(A) = r_{{\rm row}}(A)$ for all matrices $A$ over $F$, or at least give a nontrivial sufficient condition for this to hold?
\end{question}

Similarly, we have seen in Remark~\ref{rem:matroidal rank of transpose} that, unlike the case of fields, the matroidal ranks of $A$ and $A^T$ are not always equal. But we would like to understand the boundary of this failure more generally:

\begin{question}
Can we characterize the tracts $F$ for which $r_{\mat}(A) = r_{\mat}(A^T)$ for all matrices $A$ over $F$, or at least give a nontrivial sufficient condition for this to hold?
\end{question}

\subsection{Matrices versus systems of linear equations}

In linear algebra over a field $K$, solving a system of homogeneous linear equations is equivalent to studying the null space of the matrix $A$ of coefficients, and the rank-nullity theorem applied to $A$ shows that if there are more unknowns than equations then there is a non-zero solution. For a system of homogeneous linear ``equations'' over a tract $F$, each of the form $\sum_j a_{ij} X_j \in N_F$, we can still view the solution set as the null space of a matrix, but the rank-nullity theorem no longer holds in general, and a non-zero solution does not always exist.

\begin{ex}
Consider the following $2 \times 3$ matrix $A = (a_{ij})$ over the regular partial field (\Cref{ex:regular partial field}):
$$
\begin{pmatrix}
1&-1&-1 \\
1&0&1
\end{pmatrix}
$$
For the corresponding system of homogeneous linear ``equations'', there are more unknowns than equations but $(0,0,0)$ is the only solution.  
\end{ex}

\begin{question} \label{question:system of linear equations}
If $F$ is a {\bf hyperfield}, does a system of homogeneous linear equations with more unknowns than equations always have a non-zero solution?
\end{question}

If $F = K/H^\times$ is a quotient hyperfield then the answer to Question~\ref{question:system of linear equations} is yes, since we can deduce the result directly from the corresponding result over $K$. In \cite[Theorem 3.11]{hobbyJunFETVINS2024}\footnote{Written after the first version of this paper was posted to the arXiv.}, the authors construct an infinite family of non-quotient hyperfields for which more unknowns than equations implies a non-zero solution. However, Question~\ref{question:system of linear equations} remains open in general.


\begin{small}
\bibliographystyle{plain}
\bibliography{Matrix_Rank_Revised}
\end{small}

\end{document}